\documentclass[11pt]{amsart}


\usepackage{cancel}
\usepackage{latexsym, amssymb, amsmath,esint}
\usepackage{soul}
\usepackage{amsfonts, graphicx}
\usepackage{graphicx,color}
\newcommand{\be}{\begin{equation}}
\newcommand{\ee}{\end{equation}}
\newcommand{\beq}{\begin{eqnarray}}
\newcommand{\eeq}{\end{eqnarray}}

\usepackage{hyperref}

\usepackage{wasysym,stmaryrd}
\newtheorem{thm}{Theorem}[section]

\newtheorem{lma}{Lemma}[section]
\newtheorem{prop}{Proposition}[section]
\newtheorem{cor}{Corollary}[section]
\newtheorem{defn}{Definition}[section]
\theoremstyle{remark}
\newtheorem{rem}{Remark}[section]
\numberwithin{equation}{section}

\def\be{\begin{equation}}
\def\ee{\end{equation}}
\def\bee{\begin{equation*}}
\def\eee{\end{equation*}}

\def\p{\partial}

\newcommand{\pr}{\partial}

\begin{document}

\title{Sharp interior gradient estimate for area decreasing graphical mean curvature flow in arbitrary codimension} %
\date{\today}

\author{Jingbo Wan}
\address[Jingbo Wan]{Department of Mathematics, Columbia University, New York, NY, 10027}
 \email{jingbowan@math.columbia.edu}

\renewcommand{\subjclassname}{
  \textup{2020} Mathematics Subject Classification}
\subjclass[2020]{Primary 51F30, 53C24}

\date{\today}

\begin{abstract}
 We prove the sharp interior gradient estimate for area decreasing graphical mean curvature flow in arbitrary codimension, which generalizes the result in \cite{CM}.
 \end{abstract}

\maketitle

\markboth{Jingbo Wan}{Interior gradient estimate for area decreasing graphical mean curvature flow}

\section{Introduction}

A one-parameter family of smooth submanifold $\{M_t^n\}\subset \mathbb R^{n+m}$ flows by graphical mean curvature flow if
\begin{equation}
    \pr_t \textbf{z}=\textbf{H}(\textbf{z})=\Delta_{M_t} \textbf{z},
\end{equation}

where $\textbf{z}$ are coordinates on $\mathbb R^{n+m}$ s.t. 
each $M_t$ is assumed to be the graph of a map $u(\cdot,t)=\Big(u^{n+1}(\cdot,t),...,u^{n+m}(\cdot,t)\Big)$. So if $\textbf{z}=(\textbf{x},\textbf{y})$ with $\textbf{x}\in B_1^n$, then $M_t$ is given by
\[y^\alpha=u^\alpha (\textbf{x},t),\quad \alpha=n+1,...,n+m.\]

We will always use the induced gradient $\nabla^{M_t}$ and induced Laplacian $\Delta_{M_t}$ on $M_t$.

In this paper, we prove sharp gradient estimate for higher codimensional graphical mean curvature flow that satisfies the area decreasing condition.

\begin{defn}
A map $u:B_1^n\subset\mathbb R^n\rightarrow \mathbb R^m$ is said to be area decreasing if the Jacobian of $du:\mathbb R^n\rightarrow \mathbb R^m$ on any two dimensional subspace of $\mathbb R^n$ is less than one.
\end{defn}

 Proving a priori estimate is an essential step for both minimal surface system and graphical mean curvature flow. In the study of minimal surface systems, (i.e., the graph $\Sigma^n$ of $u$ is minimal in $\mathbb{R}^{n+m}$), several important contributions have been made. For the case when $n=2$ and $m=1$, Finn \cite{F1} established that the gradient bound is given by $\log|du|(0) \leq C(1+r^{-1}||u||_\infty)$. In the general hypersurface case with $m=1$, Bombieri, De Giorgi, and Miranda \cite{BDM} proved a similar bound, $\log|du|(0) \leq C(1+r^{-1}||u||_\infty)$, and Finn's example \cite{F2} demonstrated that the linear dependence in $||u||_\infty$ on right hand side of this estimate is sharp. Korevaar \cite{K} provided a simpler proof using the Maximum Principle, yielding a weaker estimate where the right hand side has a quadratic dependence on the $C^0$ norm: $\log|du|(0) \leq C(1+r^{-1}||u||_\infty^2)$. We refer readers to Chapter 16 of \cite{GiTr} for a complete expository of this topic. For higher codimensional cases ($k>1$), Mu-Tao Wang \cite{W2} obtained a similar interior gradient estimate under an area non-increasing condition. See the paper \cite{W2} or Appendix \ref{app:MSS} for more details.

In the context of graphical mean curvature flow $\Sigma_t^n \subset \mathbb{R}^{n+m}$, Ecker and Huisken \cite{EH} adapted Korevaar’s argument to derive an estimate for the gradient bound in the hypersurface case ($m=1$): $\log |du|(0,\frac{r^2}{4n}) \leq \frac{1}{2}\log(1+||du(\cdot,0)||_\infty^2 )+C (1+r^{-1}||u(\cdot,0)||_\infty )^2$, in which the right hand side also depends on the initial gradient bound. Colding and Minicozzi \cite{CM} improved this by showing a sharp interior gradient estimate for graphical mean curvature flow: $\log |du|(0,\frac{r^2}{4n}) \leq C (1+r^{-1}||u(\cdot,0)||_\infty )^2$. They made use of the grim reaper as barriers, to demonstrate the sharpness of their estimate.

In this paper, we are able to generalize Colding-Minicozzi's sharp interior gradient estimate in \cite{CM} to higher codimensional case when the initial graph is area decreasing. Our main theorem can be stated as the following:

\begin{thm}\label{thm:main}  There exists $K_1 = K_1(n,m), K_2=K_2(n,m)$ so if the graph of $u =\Big(u^{n+1} ,...,u^{n+m} \Big): B_1^n\times [0,1]\rightarrow \mathbb R^m$ flows by mean curvature with initial map $u(\cdot,0)$ being area decreasing, then
\begin{equation}\label{eqn:int-grad-est}
        |du|(\textbf{0},\frac{1}{4n})\leq K_1 e^{K_2 ||u(\cdot,0)||^2_\infty}.
\end{equation}

 Here $||u(\cdot,0)||_\infty=\max_\beta||u^\beta(\cdot,0)||_\infty$ and $|du|=\sqrt{\mathrm{tr}((du)^T du)}$.
\end{thm}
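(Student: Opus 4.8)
The strategy is to run the maximum principle on a well-chosen test function along the flow, following the Colding--Minicozzi scheme but replacing the scalar gradient quantity $v = \sqrt{1+|du|^2}$ from the hypersurface case with the natural higher-codimensional analogue. In arbitrary codimension one works with the ``$*$-scalar'' $v^* = \big(\det(I + (du)^T du)\big)^{1/2}$, equivalently $v^* = \langle \omega_0, \ast \, dM_t\rangle^{-1}$ where $\omega_0 = dx^1 \wedge \cdots \wedge dx^n$ is the reference $n$-form; this is the quantity Mu-Tao Wang uses. The area-decreasing condition is equivalent to a pointwise bound on the singular values $\lambda_i$ of $du$, namely $\lambda_i \lambda_j < 1$ for all $i\neq j$, and this condition is \emph{preserved} under mean curvature flow (one should cite or re-derive the relevant parabolic maximum principle for the two-forms $\Lambda^2 du$, as in Tsui--Wang / M.-T.\ Wang). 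Preservation of area-decreasing is what makes the geometry behave, in particular it forces a good sign in the Bochner-type evolution inequality for $v^*$: under the area-decreasing condition one has $\big(\partial_t - \Delta_{M_t}\big) v^* \le -|A|^2 v^* \cdot c(\lambda)$ with $c(\lambda) \ge 0$, or at worst $\big(\partial_t - \Delta_{M_t}\big) v^* \le 0$, so $v^*$ is a subsolution.

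Next I would set up the localized barrier. Following Colding--Minicozzi, consider on $B_1^n \times [0,1]$ a test function of the form
\[
\phi(\mathbf{x}, t) \;=\; \eta(\mathbf{x}, t)\, \log v^*(\mathbf{x}, t),
\]
where $\eta$ is a cutoff adapted to the \emph{graph} coordinates, e.g.\ $\eta = \big(1 - |\mathbf{x}|^2 - a t\big)_+$ or a smooth version thereof, with $a = a(n)$ chosen so that the heat operator applied to $\eta$ (using the induced Laplacian $\Delta_{M_t}$, which distorts $|\mathbf{x}|^2$ by a factor controlled by $v^*$) has a favorable sign. The key computation is to evaluate $\big(\partial_t - \Delta_{M_t}\big)\phi$ at an interior maximum of $\phi$: there the first-derivative terms $\nabla^{M_t}\eta \cdot \nabla^{M_t}\log v^*$ can be absorbed, the bad gradient term $|\nabla^{M_t}\log v^*|^2$ is controlled using $|\nabla v^*|^2 \le C\, v^{*2} |A|^2$ together with the Kato-type / Simons-type improvement available under area-decreasing (this is where one genuinely needs the codimension hypothesis and not just $|A|$), and the subsolution property of $v^*$ supplies the needed negative term. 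Running the maximum principle then gives a bound of the shape $\eta \log v^* \le C(n,m)$ on $B_1^n \times [0,1]$ at the point $(\mathbf 0, \tfrac{1}{4n})$, \emph{provided} one first translates in the $\mathbf y$-directions: replacing $u$ by $u - \mathbf c$ shifts nothing geometrically but lets the cutoff ``see'' the $C^0$ bound, and it is exactly this step that produces the factor $\|u(\cdot,0)\|_\infty^2$ in the exponent — the cutoff $\eta$ can only be kept positive on the relevant region at the cost of $1/\eta \sim (1 + \|u(\cdot,0)\|_\infty)^2$.

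The main obstacle, and the place where real work is needed beyond transcribing \cite{CM}, is the evolution inequality for $v^*$ under the area-decreasing condition together with the accompanying control of $|\nabla v^*|^2$: in codimension one $v = \langle \nu, e_{n+1}\rangle^{-1}$ satisfies a clean equation $\big(\partial_t - \Delta\big) v = -|A|^2 v$, but in higher codimension the analogous identity for $v^*$ carries extra curvature cross-terms whose sign is only controlled after diagonalizing $du$ and using $\lambda_i \lambda_j < 1$ (cf.\ the computations in Wang's work on the Bernstein problem and Tsui--Wang); one must check that \emph{all} of these terms cooperate with the maximum-principle argument, both at the level of the subsolution inequality and at the level of the gradient estimate $|\nabla v^*| \le C v^* |A|$ that feeds the absorption step. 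A secondary technical point is that $\Delta_{M_t}|\mathbf x|^2$ is no longer bounded by a dimensional constant, as it is for a hypersurface written over a domain, but by $2n$ times the trace of the induced metric inverse restricted to the $\mathbf x$-block, which is $\le 2n\, v^{*2}$-type expression; so the cutoff's heat operator must be estimated in terms of $v^*$ itself, and one closes the argument by choosing the time-coefficient $a$ in $\eta$ large (dimensionally) to dominate it — this is routine once the $v^*$ equation is in hand, but it is the reason the final time is $\tfrac{1}{4n}$ rather than an arbitrary interior time.
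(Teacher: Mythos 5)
Your overall architecture matches the paper's: bound the volume element $v=\sqrt{\det(I+(du)^Tdu)}$, use the area-decreasing condition to make (a power of) $v$ a subsolution of the heat operator $\partial_t-\Delta_{M_t}$, and run a Korevaar/Colding--Minicozzi maximum principle with a cutoff. But there is a genuine gap at the heart of the argument: your cutoff $\eta=(1-|\mathbf{x}|^2-at)_+$ depends only on $\mathbf{x}$ and $t$, so it cannot ``see'' the height functions at all, and translating $u$ in the $\mathbf{y}$-directions changes nothing in your test function $\eta\log v^*$. As written, your maximum principle would output $\eta\log v^*\leq C(n,m)$, i.e.\ a gradient bound \emph{independent} of $\|u(\cdot,0)\|_\infty$, which is false. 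The engine of the Korevaar/Colding--Minicozzi method is precisely a cutoff that is a function of the height: the paper uses $\phi=\eta\, e^{-a|\mathbf{y}|^2/t}$ (after a parabolic rescaling and a $\mathbf{y}$-translation that puts the heights into $[1,2]$). The factor $e^{-a|\mathbf{y}|^2/t}$ does two jobs at once: its blow-up as $t\to 0^+$ forces $\phi w$ to vanish on the initial slice, which is how the dependence on the \emph{initial gradient} (present in Ecker--Huisken) is eliminated; and the good quadratic term $-\eta\frac{a^2}{t^2}|\nabla^{M_t}|\mathbf{y}|^2|^2$ it produces is what dominates the remaining terms when $\lambda_1$ is large, after a singular-value diagonalization and a calculus lemma. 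Undoing the rescaling is then what reintroduces $(1+2\|u\|_\infty)^2$ into the exponent. Your proposal has no mechanism playing this role.

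Two secondary points. First, you invoke preservation of the area-decreasing condition by citing the global results of Tsui--Wang, but the flow here lives only over $B_1^n\times[0,1]$, so one needs a \emph{localized} version; the paper spends Section 2 on this, showing that $\varphi^4\Phi$ is a supersolution for $\varphi=(R^2-|\mathbf{z}|^2-2nt)^2$ and $\Phi=1+\tfrac{n(n-1)}{2}\log 2-\log\det S^{[2]}$, which is what lets one conclude area-decreasing at $(\mathbf{0},\tfrac{1}{4n})$. Second, the absorption of the first-order terms at the maximum is more delicate than ``controlled using $|\nabla v^*|^2\leq Cv^{*2}|A|^2$'': the paper works with $w=v^{1/n}$ precisely because the inequality $(\partial_t-\Delta_{M_t})\log v\leq -\tfrac1n|\nabla^{M_t}\log v|^2$ converts into $(\partial_t-\Delta_{M_t})w\leq -2|\nabla^{M_t}w|^2/w$ with the exact coefficient $2$ needed so that, at a maximum of $\phi w$, the cross term $-2\nabla^{M_t}\phi\cdot\nabla^{M_t}w$ cancels the gradient term and yields $(\partial_t-\Delta_{M_t})\phi\geq 0$ cleanly. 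Your choice of $\eta\log v^*$ does not close in the same way (the sign of $\tfrac{2\eta}{\log v^*}|\nabla\log v^*|^2-\tfrac{\eta}{n}|\nabla\log v^*|^2$ is wrong unless $\log v^*$ is already large), so this step needs to be reworked rather than asserted.
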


\begin{rem}
   We assume the initial map is area decreasing in the main theorem because proving the localized version of the preservation of the area-decreasing condition along graphical mean curvature flow is straightforward (see Section \ref{sec:preserve-area-decreasing}). We expect the same result to hold if the initial map is only area non-increasing, i.e., the Jacobian of the differential on any two planes is less than or equal to one. The challenge will be localizing the strong maximum principle argument in \cite{LTW}.
\end{rem}

\begin{rem}
    The graphical mean curvature flow in the hypersurface case has a scalar height function whose differential is of rank one, ensuring the area-decreasing condition is always satisfied. Consequently, our main theorem recovers the result proved in Theorem 1 of \cite{CM} in codimension one case. The quadratic dependence of the exponent in \eqref{eqn:int-grad-est} on $||u(\cdot,0)||^2_\infty$ is sharp, as demonstrated by a specific grim reaper solution constructed in Proposition 1 of \cite{CM}.
\end{rem}

The organization of this paper is as follows: In Section \ref{sec:preserve-area-decreasing}, we follow the localization idea of \cite{EH} to establish a localized version of the preservation of area-decreasing property along the graphical mean curvature flow. In Section \ref{sec:evo-eqn}, we derive the evolution equation for the volume element along the area-decreasing graphical mean curvature flow, which serves as a crucial component for the Maximum Principle argument in Section \ref{sec:proof-main}. In Section \ref{sec:proof-main}, we employ a Korevaar-type maximum principle argument as illustrated in \cite{CM}, extending the computations in \cite{CM} to the higher codimensional case. We note that the area-decreasing condition is fundamental to this paper. Finally, in Appendix \ref{app:MSS}, we present a self-contained, refined version of the maximum principle proof for the interior gradient estimate for the minimal surface system in \cite{W2}.

\vskip0.2cm

{\it Acknowledgement}: The author would like to thank Prof. Mu-Tao Wang for his his encouragement, valuable discussions and also for introducing this problem.

\section{Localized preservation of area decreasing condition}\label{sec:preserve-area-decreasing}

Given any map $u: (\mathbb R^n,g_{\mathbb R^n})\rightarrow (\mathbb R^m,g_{\mathbb R^m})$, following the idea of \cite{TW}, we consider the parallel two tensor $s=\pi_{\mathbb R^n}^* g_{\mathbb R^n} - \pi_{\mathbb R^m}^* g_{\mathbb R^m}$ in the ambient product space $\mathbb R^n\times \mathbb R^m$: for any $X,Y\in \Gamma (T (\mathbb R^{n}\times \mathbb R^m))$,
   \[s(X,Y):= g_{\mathbb R^m}(\pi_{\mathbb R^n} (X),\pi_{\mathbb R^n}(Y))- g_{\mathbb R^n}(\pi_{\mathbb R^m}(X),\pi_{\mathbb R^m}(Y)),  \]
   where $\pi_{\mathbb R^n}:\mathbb R^n\times \mathbb R^m\rightarrow \mathbb R^n$ and $\pi_{\mathbb R^m}:\mathbb R^n\times \mathbb R^m \rightarrow \mathbb R^m$ are the natural projections.

Suppose that the graph $\{M_t\}$ of $u(\cdot,t)$ is a mean curvature flow. Let $F_t$ be the embedding $x \in U\subset \mathbb R^n \rightarrow (x, u(x,t)) \in U\times \mathbb R^m$. Consider the restriction $S$ of $s$ on $F_t(U)=M_t$, i.e. $S=F_t^*s$. Given any space-time point $p$, we can do a singular value decomposition of the differential $du$ with singular value $\lambda_1\geq \lambda_2\geq ...\geq \lambda_n\geq 0.$
 
Choosing an appropriate orthonormal basis around $p$, we can diagonalize $S|_p$ so that $S_{ij}|_p=S_{ii}\delta_{ij}$. Then eigenvalues of $S$ are given by 
\[S_{ii}=\frac{1-\lambda_i^2}{1+\lambda_i^2},\quad i=1,...,n, \]

so that $S$ being positive at $p$ is equivalent to $u$ being distance decreasing at $p$.

In section 5 of \cite{TW}, one introduced a tensor $S^{[2]}$ from $S$, which can be viewed as a symmetric endomorphism on $\Lambda^2(T^*M_t)$.  With respect to an orthonormal frame,
\begin{align}
S^{[2]}_{(ij)(k\ell)} &= S_{ik}\delta_{j\ell} + S_{j\ell}\delta_{ik} - S_{i\ell}\delta_{jk} - S_{jk}\delta_{i\ell} \label{def_S2}
\end{align}
for any $i<j$ and $k<\ell$. Notice that $S^{[2]}$ being positive is equivalent to two positivity of $S$. Under the diagonalization of $S|_p$, we can write down eigenvalues of $S^{[2]}$ in terms of singular values of $du_p$
\[S^{[2]}_{(ij)(ij)}=S_{ii}+S_{jj}=\frac{2(1-\lambda_i^2\lambda_j^2)}{(1+\lambda_i^2)(1+\lambda_j^2)},\quad 1\leq i< j\leq n,   \]

so that $S^{[2]}$ being positive at $p$ is equivalent to $u$ being area decreasing at $p$.

We need the following calculation lemma for the evolution of $S_{ii}+S_{jj}$, $i< j$ from \cite{TTW}. See (3.3) in \cite{TTW} and notice we don't have any curvature terms here because the domain and target are flat.

\begin{lma}[cf. Lemma 3.1 in \cite{TTW}] \label{lem_Sij}
Suppose that $S^{[2]}$ is positive definite.  For any $i,j$ with $1\leq i<j\leq n$,
\begin{align} \begin{split}
 (S_{ii}+S_{jj})^{-1}(\pr_t-\Delta_{M_t})(S_{ii}+S_{jj}) + \frac{1}{2} (S_{ii}+S_{jj})^{-2}|\nabla^{M_t}(S_{ii}+S_{jj})|^2 \geq 0.
\end{split} 
\end{align}
\end{lma}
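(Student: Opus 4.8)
The plan is to recover the inequality as the flat-domain / flat-target specialization of the evolution equation in \cite{TTW}, so most of the work is assembling standard ingredients. \textbf{Step 1 (evolution of the tensor $S$).} I would first derive the parabolic equation satisfied by the symmetric two-tensor $S=F_t^*s$ along the flow. Since $s$ is parallel on $\mathbb R^n\times\mathbb R^m$ and the ambient space is flat, the Gauss--Weingarten relations and the Codazzi equation give, in an orthonormal frame $\{e_i\}$ of $TM_t$ and $\{\nu_\a\}$ of $NM_t$,
\begin{equation}
\nabla_k S_{ij}=h^{\a}_{ki}\,s_{\a j}+h^{\a}_{kj}\,s_{i\a},\qquad s_{i\a}:=s(e_i,\nu_\a),
\end{equation}
and, using $\pr_t F=\mathbf H=\mathbf H^\a\nu_\a$ together with the cancellation of the first-order terms $\nabla_i\mathbf H^\a$ between $\pr_t S$ and $\Delta_{M_t}S$,
\begin{equation}
(\pr_t-\Delta_{M_t})S_{ij}=-\mathbf H^{\a}\big(h^{\a}_{ik}S_{kj}+h^{\a}_{jk}S_{ik}\big)+h^{\a}_{ki}h^{\a}_{km}S_{mj}+h^{\a}_{kj}h^{\a}_{km}S_{im}-2\,h^{\a}_{ki}h^{\b}_{kj}\,s^{\perp}_{\a\b},
\end{equation}
with $s^{\perp}_{\a\b}:=s(\nu_\a,\nu_\b)$ and repeated indices summed. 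Up to sign conventions this is (3.3) of \cite{TTW} with all curvature terms of the domain and target set to zero.

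\textbf{Step 2 (adapted frame at a point).} Fix a space--time point $p$ and use the orthonormal frame adapted to the singular value decomposition of $du_p$. In this frame $S$ is diagonal, $S_{ij}=S_{ii}\delta_{ij}$ with $S_{ii}=(1-\lambda_i^2)/(1+\lambda_i^2)$; the mixed components are supported on the diagonal pairs, $s_{i,n+i}=-2\lambda_i/(1+\lambda_i^2)=:T_i$; and the normal components satisfy $s^{\perp}_{n+i,n+j}=-S_{ii}\delta_{ij}$. In particular $\nabla_m S_{ii}=2\,h^{n+i}_{mi}T_i$, so $\nabla^{M_t}(S_{ii}+S_{jj})$ has components $2(h^{n+i}_{mi}T_i+h^{n+j}_{mj}T_j)$ at $p$. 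Substituting these values into the reaction term of Step 1 renders $(\pr_t-\Delta_{M_t})(S_{ii}+S_{jj})$, as a component of the tensor $S$, an explicit quadratic form in the $h^{\a}_{k\ell}$ whose coefficients are rational functions of $\lambda_1,\dots,\lambda_n$.

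\textbf{Step 3 (from tensor component to function).} For the fixed pair $i<j$ the quantity $S_{ii}+S_{jj}$ equals $S^{[2]}_{(ij)(ij)}$, an eigenvalue of the symmetric endomorphism $S^{[2]}$ of $\Lambda^2 T^*M_t$ (equivalently, a sum of two eigenvalues of $S$), and its evolution is determined by that of $S$ through the linear relation \eqref{def_S2}. Extending the eigenframe smoothly about $p$ and applying the eigenvalue-perturbation step of Hamilton's tensor maximum principle, the scalar operator $(\pr_t-\Delta_{M_t})$ applied to the \emph{function} $S_{ii}+S_{jj}$ equals its tensor-component value at $p$ plus gradient terms carrying eigenvalue gaps in the denominators. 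Using the explicit form of $\nabla^{M_t}(S_{ii}+S_{jj})$ from Step 2, I would group these perturbation gradient terms with the non-sign-definite pieces of the reaction term into a completion of squares, producing precisely the correction $-\tfrac12(S_{ii}+S_{jj})^{-1}|\nabla^{M_t}(S_{ii}+S_{jj})|^2$ in the statement and leaving a residual quadratic form in the $h^{\a}_{k\ell}$.

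\textbf{Main obstacle.} The crux is showing that this residual quadratic form is nonnegative, and this is exactly where the hypothesis $S^{[2]}>0$ is indispensable: it guarantees $S_{ii}+S_{jj}>0$ (so the division is legitimate) and, being equivalent to the area-decreasing condition, gives $\lambda_i\lambda_j<1$ for all $i<j$ --- precisely the sign constraint that forces the coefficients of the residual form to have the right signs. The delicate part is the bookkeeping: correctly matching the eigenvalue-perturbation gradient terms with pieces of the reaction term so that exactly the gradient square in the statement --- and nothing more --- is absorbed, and then verifying that the remaining manifestly $h$-quadratic expression is $\geq 0$ under $\lambda_i\lambda_j<1$. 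I would carry this out following the computation behind Lemma~3.1 of \cite{TTW}, with all domain and target curvature contributions removed.
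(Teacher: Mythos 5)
Your Steps 1 and 2 assemble the right ingredients, and they follow the same route the paper itself points to (the paper offers no proof of this lemma; it defers entirely to (3.3) and Lemma 3.1 of \cite{TTW}): the evolution equation for the pulled-back parallel tensor $S$, the adapted singular-value frame with $S_{ii}=(1-\lambda_i^2)/(1+\lambda_i^2)$, $s_{i,n+i}=-2\lambda_i/(1+\lambda_i^2)$, $s^{\perp}_{n+i,n+j}=-S_{ii}\delta_{ij}$, and $\nabla_k S_{ii}=2h^{n+i}_{ki}s_{i,n+i}$ are all correct. The genuine gap is that the proof stops exactly where the content of the lemma begins. The whole assertion is that the explicit quadratic form in the second fundamental form obtained by adding $\tfrac12(S_{ii}+S_{jj})^{-1}|\nabla^{M_t}(S_{ii}+S_{jj})|^2$ to the reaction term of $(\pr_t-\Delta_{M_t})(S_{ii}+S_{jj})$ is nonnegative when $S^{[2]}>0$. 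You label this ``the main obstacle'' and then defer it wholesale to ``the computation behind Lemma 3.1 of \cite{TTW}.'' The delicate part is precisely the completion of squares: the cross terms $h^{n+i}_{ki}h^{n+j}_{kj}$ coming from expanding $|\nabla^{M_t}(S_{ii}+S_{jj})|^2$ must be absorbed against the good terms of the reaction part using $\lambda_i\lambda_j<1$ and the specific coefficient $\tfrac12$, and this is never exhibited. As written, the proposal is a roadmap rather than a proof.

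A secondary point: Step 3 is miscast. The quantities $S_{ii}+S_{jj}$ in the lemma are tensor components $S^{[2]}_{(ij)(ij)}$ in an orthonormal frame diagonalizing $S$ at the chosen space-time point --- this is exactly how they are consumed in the subsequent $\log\det S^{[2]}$ computation --- not eigenvalue functions of $S^{[2]}$. No eigenvalue-perturbation step of Hamilton's tensor maximum principle, and no ``eigenvalue gaps in the denominators,'' enter the derivation; invoking them would in fact create difficulties wherever eigenvalues coincide. The gradient square in the statement is not generated by perturbation terms: it is the term one adds by hand and then shows, together with the reaction term, to be a nonnegative quadratic form in $h$ under the area-decreasing condition. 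That verification is the lemma, and it is missing.
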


\begin{lma}[cf. Theorem 3.2 in \cite{TTW}]
    If $S^{[2]}$ is positive definite, then
    \begin{equation}\label{eqn:heat-logdetS2}
        \Big(\p_t-\Delta_{M_t}\Big) \log \det S^{[2]}\geq \frac{1}{2}|\nabla^{M_t}  \log \det S^{[2]} |^2.
    \end{equation}

    As a result, the scalar function $\Phi(\textbf{z},t)$ defined as
    \[\Phi:= 1+\frac{n(n-1)}{2}\log 2- \log \det S^{[2]}  \]

    satisfies 
    \begin{equation}\label{eqn:heat-Phi}
      \Big(\p_t-\Delta_{M_t}\Big)\Phi\leq -\frac{|\nabla^{M_t}\Phi |^2}{2\Phi}.
    \end{equation}
\end{lma}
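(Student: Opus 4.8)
The plan is to derive \eqref{eqn:heat-logdetS2} from Lemma \ref{lem_Sij} by summing the differential inequalities for the eigenvalues $S_{ii}+S_{jj}$ over all pairs $i<j$, and then obtain \eqref{eqn:heat-Phi} as a formal consequence of \eqref{eqn:heat-logdetS2}. Write $\mu_{ij} := S_{ii}+S_{jj} = S^{[2]}_{(ij)(ij)} > 0$ for the eigenvalues of $S^{[2]}$. The first step is to justify that $\log \det S^{[2]} = \sum_{i<j} \log \mu_{ij}$ can be differentiated along the flow even though the eigenvalues need not be smooth where they collide: this is the standard trick (as in \cite{TTW}) of using that $\det S^{[2]}$ itself is a smooth function of the tensor $S^{[2]}$, so $\log\det S^{[2]}$ is smooth wherever $S^{[2]} > 0$, and its evolution can be computed at a point by choosing, at that point, a frame in which $S$ (hence $S^{[2]}$) is diagonal, and then the eigenvalue functions $\mu_{ij}$ are $C^1$ at that point with the expected first derivatives. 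Concretely, at a fixed space-time point $p$, in such a frame,
\[
(\p_t - \Delta_{M_t})\log\det S^{[2]} = \sum_{i<j} \mu_{ij}^{-1}(\p_t-\Delta_{M_t})\mu_{ij} - \sum_{i<j}\mu_{ij}^{-2}|\nabla^{M_t}\mu_{ij}|^2 + (\text{cross terms}),
\]
where the cross terms come from $\Delta$ hitting the off-diagonal entries of $S^{[2]}$ that vanish at $p$; one must check these cross terms have a favorable sign (they are a sum of squares divided by eigenvalue gaps, nonnegative because $S^{[2]}>0$), exactly as in the derivation of (3.3) in \cite{TTW}.

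The second step is then purely algebraic: by Lemma \ref{lem_Sij}, each term in the first sum satisfies $\mu_{ij}^{-1}(\p_t-\Delta_{M_t})\mu_{ij} \geq -\tfrac12 \mu_{ij}^{-2}|\nabla^{M_t}\mu_{ij}|^2$, so
\[
(\p_t-\Delta_{M_t})\log\det S^{[2]} \geq -\tfrac32 \sum_{i<j}\mu_{ij}^{-2}|\nabla^{M_t}\mu_{ij}|^2 + (\text{nonnegative cross terms}).
\]
To finish I need to relate $\sum_{i<j}\mu_{ij}^{-2}|\nabla^{M_t}\mu_{ij}|^2$ to $|\nabla^{M_t}\log\det S^{[2]}|^2 = |\sum_{i<j}\mu_{ij}^{-1}\nabla^{M_t}\mu_{ij}|^2$; in general these are not comparable in the right direction, so the actual argument (following \cite{TTW}) must be more careful — one keeps the cross terms from step one, which precisely supply the missing negative contribution so that the combination collapses to $\tfrac12|\nabla^{M_t}\log\det S^{[2]}|^2$. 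I expect this bookkeeping — matching the gradient terms coming from Lemma \ref{lem_Sij} against the frame-rotation cross terms so that exactly the coefficient $\tfrac12$ survives — to be the main obstacle, and it is essentially the content of Theorem 3.2 in \cite{TTW}, so the cleanest route is to invoke that computation directly after noting the curvature terms vanish here.

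The third step is the passage from \eqref{eqn:heat-logdetS2} to \eqref{eqn:heat-Phi}, which is elementary. Set $\Phi = c_0 - \log\det S^{[2]}$ with $c_0 = 1 + \tfrac{n(n-1)}{2}\log 2$. Since $S^{[2]}_{(ij)(ij)} = \mu_{ij} \leq 2$ always (because $S_{ii}, S_{jj} \leq 1$), we have $\det S^{[2]} \leq 2^{n(n-1)/2}$, hence $\log\det S^{[2]} \leq \tfrac{n(n-1)}{2}\log 2 < c_0$, so $\Phi \geq 1 > 0$ and in particular $\Phi$ is positive and bounded below. Now
\[
(\p_t - \Delta_{M_t})\Phi = -(\p_t-\Delta_{M_t})\log\det S^{[2]} \leq -\tfrac12|\nabla^{M_t}\log\det S^{[2]}|^2 = -\tfrac12|\nabla^{M_t}\Phi|^2 \leq -\frac{|\nabla^{M_t}\Phi|^2}{2\Phi},
\]
where the last inequality uses $\Phi \geq 1$. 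This gives \eqref{eqn:heat-Phi} and completes the proof. The only point requiring a word of care is the lower bound $\Phi \geq 1$ used in the final step, which is why the constant $c_0$ is chosen with the explicit $\tfrac{n(n-1)}{2}\log 2$ and the extra $+1$.
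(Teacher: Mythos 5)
Your overall strategy is the paper's: compute $(\p_t-\Delta_{M_t})\log\det S^{[2]}$ at a point in a frame diagonalizing $S$, feed in Lemma \ref{lem_Sij} termwise, and deduce \eqref{eqn:heat-Phi} from \eqref{eqn:heat-logdetS2} using $\Phi\geq 1$. However, there is a genuine gap in your first two steps: the diagonal gradient--squared term enters your identity with the wrong sign, and this wrong sign is what forces you to abandon the computation and ``invoke \cite{TTW} directly,'' which is not a proof. Writing $Q=(S^{[2]})^{-1}$, one has
\begin{align*}
(\p_t-\Delta_{M_t})\log\det S^{[2]}
&= Q^{AB}\bigl[(\p_t-\Delta_{M_t})S^{[2]}_{AB}\bigr] - (\nabla_\ell Q^{AB})(\nabla_\ell S^{[2]}_{AB})\\
&= Q^{AB}\bigl[(\p_t-\Delta_{M_t})S^{[2]}_{AB}\bigr] + Q^{AC}(\nabla_\ell S^{[2]}_{CD})Q^{DB}(\nabla_\ell S^{[2]}_{AB}),
\end{align*}
since $\nabla Q=-Q(\nabla S^{[2]})Q$. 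At the point $p$ the quadratic term is $\sum_{A,B}Q^{AA}Q^{BB}|\nabla^{M_t}S^{[2]}_{AB}|^2\geq 0$, whose diagonal part is $+\sum_{i<j}\mu_{ij}^{-2}|\nabla^{M_t}\mu_{ij}|^2$, not $-\sum_{i<j}\mu_{ij}^{-2}|\nabla^{M_t}\mu_{ij}|^2$. (Even the naive one-eigenvalue computation gives a plus: $(\p_t-\Delta)\log\mu=\mu^{-1}(\p_t-\Delta)\mu+\mu^{-2}|\nabla\mu|^2$, because the sign flips against the $-\Delta$.) With the correct sign, Lemma \ref{lem_Sij} cancels only \emph{half} of this positive diagonal contribution, the off-diagonal cross terms are simply discarded as nonnegative (they are products of inverse eigenvalues times squares, not anything divided by eigenvalue gaps, and they are not needed to ``supply a missing negative contribution''), and one lands directly on
$(\p_t-\Delta_{M_t})\log\det S^{[2]}\geq \tfrac12\sum_{i<j}\mu_{ij}^{-2}|\nabla^{M_t}\mu_{ij}|^2$. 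Your version instead bottoms out at $-\tfrac32\sum_{i<j}(\cdots)$, which cannot be salvaged.

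The comparison you flag at the end is a legitimate thing to worry about: the paper finishes by identifying $\sum_{i<j}\mu_{ij}^{-2}|\nabla^{M_t}\mu_{ij}|^2$ with $|\nabla^{M_t}\log\det S^{[2]}|^2=\bigl|\sum_{i<j}\mu_{ij}^{-1}\nabla^{M_t}\mu_{ij}\bigr|^2$ at $p$, and if one only invokes Cauchy--Schwarz the constant degrades from $\tfrac12$ to $1/(n(n-1))$. But this is harmless for the intended application: a weaker constant $c>0$ still yields $(\p_t-\Delta_{M_t})\Phi\leq -\tfrac{|\nabla^{M_t}\Phi|^2}{2\Phi}$ after enlarging the additive constant in the definition of $\Phi$ so that $\Phi\geq \tfrac{1}{2c}$, and nothing downstream uses more than that. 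Your third step, including the verification $\Phi\geq 1$ via $S_{ii}+S_{jj}\leq 2$, is correct and equivalent to the paper's.
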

\begin{proof}
We follow closely the proof of Theorem 3.2 in \cite{TTW}. Suppose that $S^{[2]}$ is positive definite at $p$, hence in a space-time neighborhood of $p$, and denote its inverse by $Q$ .  Note that $Q^{(ij)(k\ell)}|_p = (S_{ii} + S_{jj})^{-1}\delta^{ik}\delta^{j\ell}$. At a given spacetime point $p$, we compute
\begin{align}
(\pr_t-\Delta_{M_t})\log\det S^{[2]}
=& Q^{AB}\left[(\pr_t-\Delta_{M_t})S^{[2]}_{AB}\right] - (\nabla_\ell Q^{AB})(\nabla_\ell S^{[2]}_{AB}) \notag \\
=& Q^{AB}\left[(\pr_t-\Delta_{M_t})S^{[2]}_{AB}\right] + Q^{AC}(\nabla_\ell S^{[2]}_{CD})Q^{DB}(\nabla_\ell S^{[2]}_{AB}) \notag \\
{}^{\text{at}\ p}=& \sum_A Q^{AA}\left[(\pr_t-\Delta_{M_t})S^{[2]}_{AA}\right] + \sum_{A,B} Q^{AA}Q^{BB} \left|\nabla^{M_t} S^{[2]}_{AB}\right|^2 ~. \label{lndet01}
\end{align}
The first term on the last line of \eqref{lndet01} is
\begin{align}
\sum_{1\leq i<j\leq n}(S_{ii}+S_{jj})^{-1}(\pr_t-\Delta_{M_t})(S_{ii}+S_{jj}) ~. \label{lndet02}
\end{align}
For the second term, it follows from the definition \eqref{def_S2} that $S^{[2]}_{(ij)(k\ell)} \equiv 0$ when $i,j,k,\ell$ are all distinct.  If $i,j,k$ are all distinct, $S^{[2]}_{AB} = \pm S_{k\ell}$ for $A = (ij)$ or $(ji)$, and $B = (ik)$ or $(ki)$.  With this understood, the second term of \eqref{lndet01} becomes
\begin{align}
&\sum_{1\leq i<j\leq n} (S_{ii}+S_{jj})^{-2}|\nabla^{M_t}(S_{ii}+S_{jj})|^2 \nonumber\\
&+ 2\sum_{1\leq i\leq n}\sum_{\substack{1\leq j<k\leq n\\j\neq i,\,k\neq i}}(S_{ii}+S_{jj})^{-1}(S_{ii}+S_{kk})^{-1}|\nabla^{M_t}S_{jk}|^2 ~. \label{lndet03}
\end{align}

Now combine \eqref{lndet01}-\eqref{lndet03} with Lemma \ref{lem_Sij}, we see
\begin{align*}
    (\pr_t-\Delta_{M_t})\log\det S^{[2]}\geq &\sum_{1\leq i<j\leq n}(S_{ii}+S_{jj})^{-1}(\pr_t-\Delta_{M_t})(S_{ii}+S_{jj})\\
    &+\sum_{1\leq i<j\leq n} (S_{ii}+S_{jj})^{-2}|\nabla^{M_t}(S_{ii}+S_{jj})|^2 \\
    \geq& \frac{1}{2}\sum_{1\leq i<j\leq n} (S_{ii}+S_{jj})^{-2}|\nabla^{M_t}(S_{ii}+S_{jj})|^2\\
    =& \frac{1}{2}|\nabla^{M_t}  \log \det S^{[2]} |^2,
\end{align*}
and this proves \eqref{eqn:heat-logdetS2}.

Now for \eqref{eqn:heat-Phi}, we first express $\Phi$ using singular values of $du$:
    \[\Phi= 1+\log \prod_{1\leq i<j\leq n} \frac{(1+\lambda_i^2)(1+\lambda_j^2)}{1-\lambda_i^2\lambda_j^2},   \]

    so that $S^{[2]}$ being positive definite implies $\Phi\geq 1$. Therefore, we get the desired equation
    \begin{align*}
    \Big(\p_t-\Delta_{M_t}\Big)\Phi\leq -\frac{1}{2}|\nabla^{M_t}\Phi |^2\leq -\frac{|\nabla^{M_t}\Phi |^2}{2\Phi}.    
    \end{align*}
\end{proof}

Next, we provide a lemma that an upper bound of $\Phi$ will imply the area decreasing condition.

\begin{lma}\label{lem:Phi-algebra}
    If $\Phi\leq C_0$, then
    \[\lambda_i^2\lambda_j^2\leq 1-e^{1-C_0},\quad 1\leq i< j\leq n  \]
\end{lma}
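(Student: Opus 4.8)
The plan is to isolate, inside the product defining $\Phi$, the single factor indexed by the given pair $(i,j)$, and discard all the others. Recall from the previous lemma that
\begin{equation*}
\Phi = 1 + \log \prod_{1\leq k<\ell\leq n} \frac{(1+\lambda_k^2)(1+\lambda_\ell^2)}{1-\lambda_k^2\lambda_\ell^2}.
\end{equation*}
First I would observe that, since $S^{[2]}$ is positive definite, every denominator satisfies $0 < 1-\lambda_k^2\lambda_\ell^2 \leq 1$, while every numerator satisfies $(1+\lambda_k^2)(1+\lambda_\ell^2) \geq 1$; hence each factor in the product is $\geq 1$, so its logarithm is nonnegative. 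Dropping all factors except the one corresponding to $(i,j)$ therefore only decreases the sum of logarithms, giving
\begin{equation*}
\Phi \geq 1 + \log \frac{(1+\lambda_i^2)(1+\lambda_j^2)}{1-\lambda_i^2\lambda_j^2} \geq 1 + \log \frac{1}{1-\lambda_i^2\lambda_j^2} = 1 - \log\bigl(1-\lambda_i^2\lambda_j^2\bigr),
\end{equation*}
where the second inequality again uses $(1+\lambda_i^2)(1+\lambda_j^2)\geq 1$.

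Combining this with the hypothesis $\Phi \leq C_0$ yields $1 - \log(1-\lambda_i^2\lambda_j^2) \leq C_0$, i.e. $\log(1-\lambda_i^2\lambda_j^2) \geq 1 - C_0$, and exponentiating gives $1 - \lambda_i^2\lambda_j^2 \geq e^{1-C_0}$, which is exactly the claimed bound $\lambda_i^2\lambda_j^2 \leq 1 - e^{1-C_0}$. The argument is purely algebraic and there is no real obstacle; the only point requiring care is making sure the two elementary inequalities (each factor $\geq 1$, and the numerator of the $(i,j)$ factor $\geq 1$) are justified from the positive definiteness of $S^{[2]}$, which is the standing assumption. One could optionally remark that the same computation shows the bound degenerates to $\lambda_i\lambda_j \to 0$ as $C_0 \to 1$, consistent with $\Phi \geq 1$ and with $\Phi = 1$ forcing $du \equiv 0$ on every $2$-plane.
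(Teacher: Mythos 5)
Your proposal is correct and follows essentially the same argument as the paper: both isolate the $(i,j)$ factor of the product defining $\Phi$ using the elementary bounds $(1+\lambda_k^2)(1+\lambda_\ell^2)\geq 1$ and $0<1-\lambda_k^2\lambda_\ell^2\leq 1$, then exponentiate. The only cosmetic difference is that the paper orders the singular values and bounds the dominant product $\lambda_1^2\lambda_2^2$, whereas you treat each pair directly.
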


\begin{proof} Suppose we have ordered the singular values as $\lambda_1\geq \lambda_2\geq ...\geq \lambda_n\geq 0$, then
    \begin{align*}
&\Phi= 1+\log \prod_{1\leq i<j\leq n} \frac{(1+\lambda_i^2)(1+\lambda_j^2)}{1-\lambda_i^2\lambda_j^2}\leq C_0\\      \Longrightarrow &  \log \prod_{1\leq i<j\leq n} \frac{1-\lambda_i^2\lambda_j^2} {(1+\lambda_i^2)(1+\lambda_j^2)}\geq 1- C_0\\
\Longrightarrow & \log (1-\lambda_1^2\lambda_2^2)\geq 1-C_0\\
\Longrightarrow & \lambda_1^2\lambda_2^2\leq 1-e^{1-C_0}.
    \end{align*}
\end{proof}

The idea of localization we use here is the same as the one in \cite{EH}, in which the following cut-off function is the key ingredient.

\begin{lma}\label{lem:cutoff-for-area-decreasing} Suppose the graph $M_t$ of $u$ flows by mean curvature and $\textbf{z}$ is its position vector. Fix any $R>0$, let $\varphi$ be a function defined by
\begin{equation}
     \varphi=\left(R^2-|\textbf{z}|^2-2nt \right)^2,
\end{equation}
then in the support of $\left(R^2-|\textbf{z}|^2-2nt \right)_+$, it satisfies
\begin{equation}\label{eqn:varphi-eqn}
    \Big(\pr_t-\Delta_{M_t}\Big)\varphi=-\frac{1}{2}\varphi^{-1}|\nabla^{M_t} \varphi|^2.
\end{equation}
\end{lma}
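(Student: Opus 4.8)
The plan is to compute $(\pr_t - \Delta_{M_t})\varphi$ directly, exploiting the fact that $\varphi = w^2$ where $w := R^2 - |\textbf{z}|^2 - 2nt$, and then to use the well-known heat-type identities for $|\textbf{z}|^2$ on a mean curvature flow. Recall first that for the position vector on an $n$-dimensional mean curvature flow in Euclidean space one has $\Delta_{M_t}\textbf{z} = \textbf{H}$ and $\pr_t \textbf{z} = \textbf{H}$, so $(\pr_t - \Delta_{M_t})\textbf{z} = 0$; consequently
\begin{equation*}
\Big(\pr_t - \Delta_{M_t}\Big)|\textbf{z}|^2 = 2\langle \textbf{z}, (\pr_t - \Delta_{M_t})\textbf{z}\rangle - 2|\nabla^{M_t}\textbf{z}|^2 = -2n,
\end{equation*}
since $|\nabla^{M_t}\textbf{z}|^2 = \operatorname{tr}_{M_t}(\mathrm{Id}) = n$ (the tangential projection of the ambient frame). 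Therefore $(\pr_t - \Delta_{M_t}) w = -(\pr_t-\Delta_{M_t})|\textbf{z}|^2 - 2n = 2n - 2n = 0$, i.e. $w$ is a (super/sub)solution—in fact an exact solution—of the heat equation on $M_t$ on the region where it is smooth.

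Next I would apply the chain rule for the heat operator to $\varphi = w^2$. Using $(\pr_t - \Delta_{M_t})(w^2) = 2w(\pr_t-\Delta_{M_t})w - 2|\nabla^{M_t} w|^2$ and the identity just established, this collapses to
\begin{equation*}
\Big(\pr_t - \Delta_{M_t}\Big)\varphi = -2|\nabla^{M_t} w|^2.
\end{equation*}
It then remains to rewrite the right-hand side in terms of $\varphi$ itself. Since $\nabla^{M_t}\varphi = 2w\,\nabla^{M_t} w$, we get $|\nabla^{M_t}\varphi|^2 = 4w^2 |\nabla^{M_t} w|^2 = 4\varphi\,|\nabla^{M_t} w|^2$, hence $|\nabla^{M_t} w|^2 = \tfrac{1}{4}\varphi^{-1}|\nabla^{M_t}\varphi|^2$ wherever $\varphi > 0$, i.e. in the interior of the support of $w_+$. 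Substituting gives $(\pr_t - \Delta_{M_t})\varphi = -\tfrac{1}{2}\varphi^{-1}|\nabla^{M_t}\varphi|^2$, which is exactly \eqref{eqn:varphi-eqn}.

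There is essentially no serious obstacle here; the only points requiring a little care are (i) confirming $|\nabla^{M_t}\textbf{z}|^2 = n$, which follows because the components of $\textbf{z}$ restricted to $M_t$ have gradients forming the orthonormal tangential basis, so their squared norms sum to the dimension $n$; and (ii) making sure the computation is carried out only on the open set where $w > 0$, so that division by $\varphi$ is legitimate and all quantities are smooth—on the boundary $\{w=0\}$ the identity is understood in the limiting sense, with both sides vanishing to the appropriate order. One should also note the factor bookkeeping: the $2n$ from $\Delta_{M_t}|\textbf{z}|^2$ and the $2nt$ in the definition of $w$ are matched precisely so that the first-order-in-$w$ term drops out, leaving a pure gradient term; this matching is the whole point of the definition of $\varphi$ and mirrors the cut-off construction in \cite{EH}.
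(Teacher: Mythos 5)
Your proposal is correct and follows essentially the same route as the paper: both reduce to the identity $(\pr_t-\Delta_{M_t})|\textbf{z}|^2=-2n$ for mean curvature flow and then observe that the first-order term cancels, leaving only the gradient term, which is rewritten as $-\tfrac12\varphi^{-1}|\nabla^{M_t}\varphi|^2$. Your factoring through the caloric function $w=R^2-|\textbf{z}|^2-2nt$ is just a notational reorganization of the paper's direct expansion of $\pr_t\varphi$, $\nabla^{M_t}\varphi$, and $\Delta_{M_t}\varphi$.
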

\begin{proof}
    We can directly compute    
    \begin{align*}
     \pr_t\varphi=&   2\left(R^2-|\textbf{z}|^2-2nt \right)\left(- \pr_t |\textbf{z}|^2 -2n \right),\\
     \nabla^{M_t}\varphi=& 2\left(R^2-|\textbf{z}|^2-2nt \right)\left(-\nabla^{M_t}|\textbf{z}|^2 \right),\\
     \Delta_{M_t}\varphi=&2\left(R^2-|\textbf{z}|^2-2nt \right)\left(-\Delta_{M_t}|\textbf{z}|^2 \right)+2\left|\nabla^{M_t}|\textbf{z}|^2\right|^2.
    \end{align*}

    Therefore, in the support of $\left(R^2-|\textbf{z}|^2-2nt \right)_+$, we have
    \begin{align*}
    \Big(\pr_t-\Delta_{M_t}\Big)\varphi=&    2\left(R^2-|\textbf{z}|^2-2nt \right)\left(- (\pr_t-\Delta_{M_t}) |\textbf{z}|^2 -2n \right)-2\left|\nabla^{M_t}|\textbf{z}|^2\right|^2\\
    =&-\frac{1}{2\varphi}\times
    4\left(R^2-|\textbf{z}|^2-2nt \right)^2\left|\nabla^{M_t}|\textbf{z}|^2 \right|^2\\
    =&-\frac{1}{2}\varphi^{-1}|\nabla^{M_t} \varphi|^2,
    \end{align*}
    where in the second equality, we have used the fact that $(\pr_t-\Delta_{M_t}) |\textbf{z}|^2=-2n$ for mean curvature flow.
\end{proof}

\begin{prop}\label{prop:local-area-decreasing-preservation}
    Along graphical mean curvature flow $M_t$, we have the estimate $\varphi(\textbf{z},t)^4\Phi(\textbf{z},t)\leq\sup_{M_0}\varphi^4 \Phi $, therefore,
    \begin{equation}\label{eqn:local-area-decreasing-preservation}
       \left(R^2-|\textbf{z}|^2-2nt \right)_+ \Phi (\textbf{z},t)^{1/8} \leq \sup_{M_0} \left(R^2-|\textbf{z}|^2 \right)_+ \Phi (\textbf{z},0)^{1/8},
    \end{equation}

    as long as $\Phi(\textbf{z},t)$ is defined everywhere on the support of $\left(R^2-|\textbf{z}|^2-2nt \right)_+$.
\end{prop}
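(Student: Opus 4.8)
The plan is to apply the parabolic maximum principle to the product $\psi := \varphi^4 \Phi$, where $\varphi = (R^2 - |\textbf{z}|^2 - 2nt)^2$. The key is that both factors have been arranged so that their heat operators produce gradient terms with exactly the right signs and coefficients for the product rule to close. First I would compute the evolution of $\psi$ using the product rule for the operator $(\pr_t - \Delta_{M_t})$:
\[
(\pr_t - \Delta_{M_t})(\varphi^4 \Phi) = \Phi\,(\pr_t - \Delta_{M_t})\varphi^4 + \varphi^4 (\pr_t - \Delta_{M_t})\Phi - 2\,\nabla^{M_t}\varphi^4 \cdot \nabla^{M_t}\Phi.
\]
From \eqref{eqn:varphi-eqn} one gets $(\pr_t - \Delta_{M_t})\varphi = -\tfrac12 \varphi^{-1}|\nabla^{M_t}\varphi|^2$, and then the chain rule gives $(\pr_t - \Delta_{M_t})\varphi^4 = 4\varphi^3(\pr_t-\Delta_{M_t})\varphi - 12\varphi^2|\nabla^{M_t}\varphi|^2 = -14\varphi^2|\nabla^{M_t}\varphi|^2$; similarly $|\nabla^{M_t}\varphi^4|^2 = 16\varphi^6|\nabla^{M_t}\varphi|^2$. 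Plugging in \eqref{eqn:heat-Phi}, namely $(\pr_t - \Delta_{M_t})\Phi \leq -\tfrac{|\nabla^{M_t}\Phi|^2}{2\Phi}$, and using $\Phi \geq 1$, the goal is to show the right-hand side is $\leq 0$ at an interior maximum of $\psi$, or more precisely to run the argument purely via the maximum principle after discarding the manifestly nonpositive pieces.

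The cleanest way I would actually carry this out is to work with $w := \log \psi = 4\log\varphi + \log\Phi$ on the (open) region where $\varphi > 0$, since then the cross term disappears cleanly. We have $(\pr_t - \Delta_{M_t})\log\varphi = \varphi^{-1}(\pr_t - \Delta_{M_t})\varphi - \varphi^{-2}|\nabla^{M_t}\varphi|^2 = -\tfrac32 \varphi^{-2}|\nabla^{M_t}\varphi|^2 = -\tfrac32|\nabla^{M_t}\log\varphi|^2$, and $(\pr_t - \Delta_{M_t})\log\Phi \leq -\tfrac12|\nabla^{M_t}\log\Phi|^2$ (this is \eqref{eqn:heat-logdetS2}, recalling $\Phi$ differs from $-\log\det S^{[2]}$ by a constant). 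Hence
\[
(\pr_t - \Delta_{M_t}) w \leq -6|\nabla^{M_t}\log\varphi|^2 - \tfrac12|\nabla^{M_t}\log\Phi|^2 \leq 0,
\]
so $w$ is a subsolution of the heat equation on $\{\varphi > 0\}$. On the parabolic boundary of this region, $w \to -\infty$ (as $\varphi \to 0^+$ and $\Phi$ stays bounded on a neighborhood by the standing hypothesis that $\Phi$ is defined on the support), so the maximum of $w$ over $M_t$ for $t \in [0, T]$ is attained at $t = 0$. This yields $\sup_{M_t} w \leq \sup_{M_0} w$, i.e. $\varphi^4\Phi(\textbf{z},t) \leq \sup_{M_0}\varphi^4\Phi$, which is the first assertion. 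The second inequality \eqref{eqn:local-area-decreasing-preservation} then follows by taking eighth roots: $\varphi^{1/2} = (R^2 - |\textbf{z}|^2 - 2nt)$ on the support, so $(\varphi^4\Phi)^{1/8} = (R^2-|\textbf{z}|^2-2nt)_+\,\Phi^{1/8}$, and monotonicity of $x \mapsto x^{1/8}$ transfers the bound.

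I expect the main obstacle to be the justification of the maximum principle itself rather than the algebra. Two technical points need care: (i) noncompactness of $M_t$ — one must verify that the maximum of $\psi$ over the (compactly supported in $\textbf{z}$, by the cutoff) region is actually attained, which uses that $\{|\textbf{z}|^2 \leq R^2\}$ intersected with a graph over $B_1^n$ is compact together with smoothness of the flow on the relevant time interval; and (ii) the standing assumption ``as long as $\Phi$ is defined everywhere on the support of $(R^2 - |\textbf{z}|^2 - 2nt)_+$'' must be used to ensure $\Phi$ (equivalently, positivity of $S^{[2]}$) does not degenerate in the interior of the region before we reach the boundary where $\varphi$ vanishes — this is exactly what prevents $w$ from blowing up at an interior point. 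Once these are in place, the argument is a standard scalar maximum principle comparison, and the sign bookkeeping above shows $w$ is genuinely a subsolution with no bad terms to absorb.
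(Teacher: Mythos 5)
Your reduction to logarithms rests on a sign error in the chain rule, and this invalidates the central claim that $w=\log(\varphi^4\Phi)$ is a subsolution. For the heat operator one has
\[
(\pr_t-\Delta_{M_t})\log f=f^{-1}(\pr_t-\Delta_{M_t})f+f^{-2}|\nabla^{M_t}f|^{2},
\]
with a \emph{plus} sign on the gradient term (since $\Delta_{M_t}\log f=f^{-1}\Delta_{M_t}f-f^{-2}|\nabla^{M_t}f|^2$, and this enters $\pr_t-\Delta_{M_t}$ with the opposite sign). Hence \eqref{eqn:varphi-eqn} gives $(\pr_t-\Delta_{M_t})\log\varphi=+\tfrac12|\nabla^{M_t}\log\varphi|^2\geq 0$, not $-\tfrac32|\nabla^{M_t}\log\varphi|^2$. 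Similarly, \eqref{eqn:heat-logdetS2} controls $(\pr_t-\Delta_{M_t})\Phi$ (it is $\Phi$, not $\log\Phi$, that differs from $-\log\det S^{[2]}$ by a constant); applying the chain rule correctly gives $(\pr_t-\Delta_{M_t})\log\Phi\leq(\Phi^{-1}-\tfrac12)\Phi^{-1}|\nabla^{M_t}\Phi|^2$, whose right-hand side is positive wherever $1\leq\Phi<2$. With the correct signs your computation yields $(\pr_t-\Delta_{M_t})w\leq 2|\nabla^{M_t}\log\varphi|^2+\tfrac{\Phi}{2}|\nabla^{M_t}\log\Phi|^2$, i.e.\ the gradient terms come out with the wrong sign and no maximum principle applies. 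The cross term does not ``disappear cleanly'' under the logarithm: it is traded for exactly these nonnegative concavity corrections, and overcoming them is the entire content of the proposition.

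For comparison, the paper works with $\varphi\,\Phi^{1/4}$ (whose fourth power is $\varphi^4\Phi$) and completes a square to get $(\pr_t-\Delta_{M_t})(\varphi\Phi^{1/4})\leq-\tfrac12\varphi\Phi^{1/4}\bigl|\nabla^{M_t}\log\varphi+\tfrac12\nabla^{M_t}\log\Phi\bigr|^2\leq 0$ before invoking the weak maximum principle; the eighth root then gives \eqref{eqn:local-area-decreasing-preservation}. If you redo your argument along these lines, keep track of the second-order correction from the concave power, $(\pr_t-\Delta_{M_t})\Phi^{1/4}=\tfrac14\Phi^{-3/4}(\pr_t-\Delta_{M_t})\Phi+\tfrac{3}{16}\Phi^{-7/4}|\nabla^{M_t}\Phi|^2$: this nonnegative term must also be absorbed, and verifying that the resulting quadratic form in $(\nabla^{M_t}\log\varphi,\nabla^{M_t}\log\Phi)$ is genuinely nonpositive is the delicate point. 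A robust way to close the argument is to replace $\Phi^{1/4}$ by $e^{\Phi/2}$ (a constant multiple of $(\det S^{[2]})^{-1/2}$), which satisfies $(\pr_t-\Delta_{M_t})e^{\Phi/2}\leq-2|\nabla^{M_t}e^{\Phi/2}|^2/e^{\Phi/2}$ by \eqref{eqn:heat-logdetS2}; this is exactly the Ecker--Huisken form that pairs with $(\pr_t-\Delta_{M_t})\varphi=-\tfrac12\varphi^{-1}|\nabla^{M_t}\varphi|^2$ to make $\varphi\,e^{\Phi/2}$ a genuine subsolution. Your technical points (i) and (ii) about attainment of the maximum and non-degeneracy of $S^{[2]}$ are correct but moot until the differential inequality itself is established.
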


\begin{proof}
If $\Phi(\textbf{z},t)$ is defined everywhere on the support of $\left(R^2-|\textbf{z}|^2-2nt \right)_+$, we can make use of \eqref{eqn:heat-Phi}, \eqref{eqn:varphi-eqn} to compute the evolution of $\varphi \Phi^{1/4}$:
    \begin{align*}
\Big(\p_t-\Delta_{M_t}\Big) (\varphi \Phi^{1/4})=&\Phi^{1/4} \Big(\p_t-\Delta_{M_t}\Big) \varphi        + \frac{1}{4}\varphi\Phi^{-3/4}\Big(\p_t-\Delta_{M_t}\Big) \Phi -2  \nabla^{M_t} \varphi\cdot \nabla^{M_t} \Phi^{1/4}\\
\leq& -\frac{1}{2} \Phi^{1/4} \varphi^{-1}\left| \nabla^{M_t}\varphi\right|^2-\frac{1}{8}\varphi \Phi^{-7/4}\left| \nabla^{M_t}\Phi\right|^2-\frac{1}{2}\Phi^{-3/4}\nabla^{M_t} \varphi\cdot \nabla^{M_t} \Phi\\
=&-\frac{1}{2}\varphi\Phi^{1/4}\left|\frac{\nabla^{M_t}\varphi}{\varphi} +\frac{1}{2} \frac{\nabla^{M_t}\Phi}{\Phi}    \right|^2\\
\leq & 0.
    \end{align*}
The weak parabolic maximum principle then implies the result.
\end{proof}

\begin{cor}\label{cor:area-decreasing}
If the graph $M_t$ of $u(\cdot,t):B_1^n\rightarrow \mathbb R^m$ flows by mean curvature, and the initial map $u(\cdot,0)$ is area decreasing in $B_1^n$, then $u\left(\cdot, \frac{1}{4n}\right)$ is area decreasing at the origin $\textbf{0}\in B_1^n$.
\end{cor}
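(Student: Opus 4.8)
The plan is to deduce the corollary from Proposition~\ref{prop:local-area-decreasing-preservation} by a continuity argument, and then to convert the resulting bound on $\Phi$ into the area-decreasing conclusion via Lemma~\ref{lem:Phi-algebra}. Fix a radius $R<1$. Since $|\textbf{z}|^2\ge|\textbf{x}|^2$ along a graph, the region $\{|\textbf{z}|^2<R^2-2nt\}\cap M_t$ projects into $\{|\textbf{x}|<R\}$, which is compactly contained in $B_1^n$, so asking whether $u(\cdot,t)$ is area decreasing there makes sense. At $t=0$ this holds by hypothesis, so $\Phi(\cdot,0)$ is finite on the relatively compact support of $(R^2-|\textbf{z}|^2)_+$ on $M_0$ and
\[
A:=\sup_{M_0}\big(R^2-|\textbf{z}|^2\big)_+\,\Phi(\cdot,0)^{1/8}<\infty .
\]
Note that the hypothesis of Proposition~\ref{prop:local-area-decreasing-preservation}---that $\Phi$ be defined on the (shrinking) support of $(R^2-|\textbf{z}|^2-2nt)_+$---is precisely the preservation statement we are after, so the Proposition must be fed into a continuity argument.

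Accordingly, let $I\subset[0,\tfrac1{4n}]$ be the set of times $t$ such that $u(\cdot,s)$ is area decreasing at every point of $\{|\textbf{z}|^2<R^2-2ns\}\cap M_s$ for all $s\le t$; then $0\in I$, and $I$ is an interval. For $t\in I$, Proposition~\ref{prop:local-area-decreasing-preservation} applied on $[0,t]$ gives
\[
\Phi(\textbf{z},s)\ \le\ A^8\,\big(R^2-|\textbf{z}|^2-2ns\big)^{-8}\qquad\text{on }\{|\textbf{z}|^2<R^2-2ns\}\cap M_s,\ \ s\le t .
\]
Since the flow is smooth the singular values $\lambda_i$ of $du$ vary continuously, and ``area decreasing'' ($\lambda_i^2\lambda_j^2<1$ for all $i<j$) is equivalent to ``$\Phi<\infty$''; as the right-hand side above is locally bounded on $\bigcup_s\big(\{|\textbf{z}|^2<R^2-2ns\}\cap M_s\big)\times\{s\}$, it prevents $\Phi$ from escaping to $+\infty$ either as $s\uparrow\sup I$ or as $t$ is pushed slightly past $\sup I$. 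Hence $I$ is both relatively closed and relatively open, so $I=[0,\tfrac1{4n}]$.

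Finally, set $t=\tfrac1{4n}$, so $2nt=\tfrac12$. The point of $M_{1/4n}$ over the spatial origin is $\textbf{z}_0=\big(\textbf{0},u(\textbf{0},\tfrac1{4n})\big)$, with $|\textbf{z}_0|^2=|u(\textbf{0},\tfrac1{4n})|^2$. Once $|\textbf{z}_0|^2<R^2-\tfrac12$---which holds after the standard normalization of $\|u(\cdot,0)\|_\infty$, or by a routine interior $C^0$ bound of Ecker--Huisken type (using $(\pr_t-\Delta_{M_t})|\textbf{z}|^2=-2n$)---membership in $I$ together with the displayed estimate give
\[
\Phi\Big(\textbf{z}_0,\tfrac1{4n}\Big)\ \le\ A^8\Big(R^2-\tfrac12-\big|u(\textbf{0},\tfrac1{4n})\big|^2\Big)^{-8}\ =:\ C_0<\infty ,
\]
and Lemma~\ref{lem:Phi-algebra} yields $\lambda_i^2\lambda_j^2\le 1-e^{1-C_0}<1$ for all $i<j$ at $(\textbf{0},\tfrac1{4n})$; that is, $u(\cdot,\tfrac1{4n})$ is area decreasing at the origin.

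I expect the main obstacle to be the continuity argument---in particular, arguing that inside the cutoff region the area-decreasing condition can only be lost by some $\lambda_i^2\lambda_j^2$ (equivalently $\Phi$) reaching its critical value, so that the a priori bound of Proposition~\ref{prop:local-area-decreasing-preservation} genuinely forbids degeneration. The care required stems from the cutoff region shrinking in $t$, which forces one to work on compact space--time pieces where $R^2-|\textbf{z}|^2-2nt$ is bounded away from $0$; the secondary issue, that $\textbf{z}_0$ lies inside the cutoff region at the final time, is absorbed by the ambient setup or a standard interior height estimate.
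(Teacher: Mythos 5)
Your overall route is the same as the paper's: apply Proposition \ref{prop:local-area-decreasing-preservation} with a suitable $R$, bound $\Phi$ at the space--time point $(\textbf{0},\tfrac{1}{4n})$, and invoke Lemma \ref{lem:Phi-algebra}; your added continuity argument addresses a circularity (the Proposition's hypothesis is essentially the conclusion being sought) that the paper passes over in silence. However, there is a concrete gap at the final step. You fix $R<1$, so your conclusion requires $|\textbf{z}_0|^2=|u(\textbf{0},\tfrac{1}{4n})|^2<R^2-\tfrac12<\tfrac12$, and neither of your suggested justifications delivers this: there is no ``normalization of $\|u(\cdot,0)\|_\infty$'' compatible with the flow that makes the heights small (parabolic rescaling also rescales the domain and the time $\tfrac{1}{4n}$, so it changes the statement), and maximum-principle or Ecker--Huisken-type height estimates only give $|u^\alpha(\textbf{0},\tfrac{1}{4n})|\le\|u^\alpha(\cdot,0)\|_\infty$, which need not be small. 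The paper instead takes $R^2=1+\sum_\alpha\|u^\alpha(\cdot,0)\|_\infty^2$, so that the height bound yields $R^2-|\textbf{z}_0|^2-\tfrac12\ge\tfrac12$. If you prefer to keep $R<1$ (which has the genuine virtue that $\varphi$ then vanishes on the lateral boundary $\bigcup_t\partial M_t$, as the weak maximum principle behind Proposition \ref{prop:local-area-decreasing-preservation} requires), the correct fix is a preliminary translation in the $\textbf{y}$-directions sending $u(\textbf{0},\tfrac{1}{4n})$ to $\textbf{0}$: this preserves the flow, the area-decreasing hypothesis, and $du$, and afterwards $|\textbf{z}_0|=0$. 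As written, your proof only covers maps with $|u(\textbf{0},\tfrac{1}{4n})|^2<R^2-\tfrac12$.

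A secondary caveat concerns the openness half of your open--closed argument, which is more delicate than ``the bound prevents $\Phi$ from escaping to $+\infty$ as $t$ is pushed slightly past $\sup I$.'' Proposition \ref{prop:local-area-decreasing-preservation} gives no information where $R^2-|\textbf{z}|^2-2nt$ vanishes, and a point of $M_t$ lying in the cutoff region at a time $t>\sup I$ need not correspond to a point of the cutoff region at time $\sup I$: at fixed $\textbf{x}$ the derivative $\pr_t|\textbf{z}|^2$ has no sign (only $(\pr_t-\Delta_{M_t})|\textbf{z}|^2=-2n$ is controlled), so the region can ``recapture'' points where no quantitative bound was available at time $\sup I$. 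One therefore cannot directly propagate the estimate forward; making this step rigorous requires working on compact space--time pieces where the weight is bounded below and using smoothness of the flow to control how far points can migrate. You flag this as the expected obstacle rather than claiming to have resolved it, and the paper's own proof does not address the issue at all, so I record it as a shared gap rather than an error specific to your write-up.
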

\begin{proof}
    We choose $R^2=1+ \sum_{\alpha=n+1}^{n+m} ||u^\alpha (\cdot,0)||_\infty^2 $. Since the initial map $u(\cdot,0)$ is area decreasing, \eqref{eqn:local-area-decreasing-preservation} implies
    \[\left(R^2-|\textbf{z}|^2-2nt \right)_+ \Phi (\textbf{z},t)^{1/8} \leq C_0<\infty,\]
    for some constant $C_0$ depending on $M_0$. By the maximum principle, we know
    \[\sum_{\alpha=n+1}^{n+m} ||u^\alpha (\textbf{0},\frac{1}{4n} )||_\infty^2\leq \sum_{\alpha=n+1}^{n+m} ||u^\alpha (\cdot,0)||_\infty^2. \] So at $t=\frac{1}{4n}$ and $\textbf{x}=\textbf{0}$, $\textbf{y}=u\left(\textbf{0}, \frac{1}{4n}\right)$,
    \[\Phi\left( \left(\textbf{0}, u\left(\textbf{0}, \frac{1}{4n}\right)\right) ,\frac{1}{4n}\right)\leq  (2C_0)^8<\infty,  \]
    then the area decreasing property of $u\left(\cdot, \frac{1}{4n}\right)$ at $\textbf{0}$ follows from Lemma \ref{lem:Phi-algebra}.
\end{proof}

\section{A crucial evolution equation along area decreasing GMCF}\label{sec:evo-eqn}

In this section, we include the derivation of an essential lemma, which states that the volume element $ v$ of the graphical mean curvature flow $ u$ is, in a certain sense, a supersolution to a heat equation, given that the area decreasing condition is satisfied. This crucial lemma indicates the essential nature of area decreasing condition for interior gradient estimate of higher codimensional graphical mean curvature flow. This evolution equation was first derived in \cite{W1}, and the elliptic version can also be seen in Lemma 2.2 of \cite{W2}.

\begin{lma}\label{lem:w-super-heat}
   Suppose the graph of $ u:B^n_{1}\times \left[0, 1\right] \longrightarrow \mathbb R^m $ flows by mean curvature and satisfies the area decreasing condition, then $\frac{1}{n}$-power of the volume element $ v=\sqrt{\det\left(I+(du)^T du  \right)}$, $ w= v^{\frac{1}{n}}$ satisfies
  \begin{equation}\label{eqn:w-super-heat}
   (\pr_t-\Delta_{M_t})  w\leq -\frac{2|\nabla^{M_t}  w|^2}{ w}.   
  \end{equation}
\end{lma}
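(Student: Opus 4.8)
The plan is to work in the graphical gauge and compute the evolution of the volume element $v = \sqrt{\det(I + (du)^T du)}$ directly, then pass to $w = v^{1/n}$ by the chain rule. First I would recall the standard fact (see \cite{W1}, \cite{W2}) that along graphical mean curvature flow the volume element $v$ satisfies an evolution equation of the schematic form
\[
(\pr_t - \Delta_{M_t}) v = -v\Big(|A|^2 + (\text{terms involving } A \text{ and } S)\Big) - v^{-1}|\nabla^{M_t} v|^2 + (\text{good gradient terms}),
\]
where $A$ is the second fundamental form of $M_t$. The key point is to organize the zeroth-order curvature quadratic so that, \emph{under the area-decreasing hypothesis} (equivalently $S^{[2]} > 0$, i.e. $\lambda_i^2\lambda_j^2 < 1$ for all $i<j$), it is manifestly nonpositive. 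This is exactly where the two-positivity enters: in the singular-value frame diagonalizing $S$, the quadratic form in the components $A_{ij\alpha}$ of the second fundamental form decomposes into pieces weighted by factors like $(1-\lambda_i^2\lambda_j^2)$ or $(1+\lambda_i\lambda_j)$ up to positive normalization, and area-decreasing makes every such weight positive, killing the bad terms. I would cite the precise form of this computation from \cite{W1}/\cite{W2} rather than rederive it.

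Second, I would carry out the substitution $w = v^{1/n}$. Since $\nabla^{M_t} w = \frac{1}{n} v^{1/n-1}\nabla^{M_t} v$ and
\[
(\pr_t - \Delta_{M_t}) w = \frac{1}{n} v^{1/n-1}(\pr_t-\Delta_{M_t})v - \frac{1}{n}\Big(\frac{1}{n}-1\Big) v^{1/n-2}|\nabla^{M_t} v|^2,
\]
the transformation produces an \emph{extra positive} multiple of $v^{1/n-2}|\nabla^{M_t} v|^2$ (because $\frac{1}{n}-1<0$, so $-\frac1n(\frac1n-1)>0$). The strategy is to show that combining the $-v^{-1}|\nabla^{M_t}v|^2$ term already present in the $v$-equation with this new term, and rewriting everything in terms of $|\nabla^{M_t} w|^2 = \frac{1}{n^2}v^{2/n-2}|\nabla^{M_t} v|^2$, yields precisely a coefficient of $-2$ on $|\nabla^{M_t} w|^2/w$. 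Concretely, after dropping the nonpositive zeroth-order curvature terms one is left with $(\pr_t-\Delta_{M_t})w \leq -c\, v^{1/n-2}|\nabla^{M_t}v|^2$ for an explicit $c>0$, and the bookkeeping $v^{1/n-2}|\nabla^{M_t}v|^2 = n^2 v^{-1/n}|\nabla^{M_t}w|^2 = n^2 |\nabla^{M_t}w|^2/w$ turns this into the stated inequality once $c$ is checked to be at least $2/n^2$; the reason the constant comes out exactly $2$ (and not merely some positive number) is that both the intrinsic $|\nabla v|^2$ term from the $v$-evolution and an extra term extracted from $|A|^2$ via Cauchy–Schwarz conspire with the $w$-substitution gain.

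The main obstacle I anticipate is precisely the second step: squeezing out the sharp constant $2$. The nonpositivity of the zeroth-order terms under area-decreasing is by now standard (it is the content of the preservation results of \cite{TW}, \cite{TTW}, \cite{LTW}), but getting a clean coefficient requires being careful about how much of $|A|^2 v$ must be spent to absorb cross terms of the form $\nabla^{M_t} v \cdot (\text{something})$ versus how much is simply discarded. In \cite{CM} the codimension-one analogue gives the analogous sharp gradient term, and the higher-codimension computation in \cite{W1} is designed to reproduce it; so I would follow \cite{W1} closely, verify that the area-decreasing condition is used \emph{only} to discard the curvature quadratic and nowhere in the gradient-term accounting, and then let the $v \to v^{1/n}$ algebra deliver the factor $2$ as above.
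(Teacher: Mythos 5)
Your proposal is correct and follows essentially the same route as the paper: start from the evolution equation for the volume element in \cite{W1}, use the area-decreasing condition $\lambda_i\lambda_j<1$ to absorb the off-diagonal curvature cross terms into $|A|^2$ as complete squares, extract the gradient term $-\frac{1}{n}|\nabla^{M_t}\log v|^2$ by Cauchy--Schwarz, and let the $v\mapsto v^{1/n}$ chain rule deliver the constant $2$. The only (harmless) imprecision is attributing the Cauchy--Schwarz step to $|A|^2$: in the actual computation all of $|A|^2$ is spent on absorbing the cross terms (whose absorption degenerates as $\lambda_i\lambda_j\to 1$), and the gradient term instead comes from the separate $\lambda_i^2$-weighted diagonal sum $\sum_{i,k}\lambda_i^2 h_{n+i,ik}^2$ produced by the identity for $|\nabla^{M_t}\log v|^2$.
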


\begin{proof}
    For the convenience of the readers, we include a derivation of this lemma starting from Equation (3.8) of \cite{W1}.  Since our domain and target are flat, we have from Equation (3.8) of \cite{W1} that the reciprocal of the volume element $v^{-1}=*\Omega_1$ evolves along graphical mean curvature flow as
    \begin{equation}\label{eqn:heat-v1}
     \begin{split}
          &(\pr_t-\Delta_{M_t})   v^{-1}\\
          &=   v^{-1} \cdot \left(\sum_{\alpha}\sum_{i,k} h^2_{\alpha,ik} - 2\sum_{k; i<j} \lambda_i\lambda_j h_{n+i,ik}h_{n+j,jk} +2 \sum_{k; i<j} \lambda_i\lambda_j h_{n+j,ik} h_{n+i,jk}   \right),
     \end{split}  
    \end{equation}
    where $\lambda_1\geq \lambda_2\geq ...\geq \lambda_n\geq 0$ are the singular values of $d  u$ at a given point, and $h_{\alpha,ik}$ are components of the second fundamental form. Now notice that 
    \begin{align}\label{eqn:heat-v2}
    |\nabla^{M_t}\log   v |^2=& \sum_{k=1}^n \left(\sum_{i=1}^n \lambda_i h_{n+i,ik} \right)^2 \\
    =&2\sum_{k=1}^n\sum_{1\leq i<j\leq n} \lambda_i\lambda_j h_{n+i,ik}h_{n+j,jk}+\sum_{k,i=1}^n\lambda_i^2 h_{n+i,ik}^2, \nonumber    
    \end{align}
    see the proof of Lemma 2.2 in \cite{W2}. Combining \eqref{eqn:heat-v1} and \eqref{eqn:heat-v2}, we get
    \begin{align}\label{eqn:heat-v3}
     &(\pr_t-\Delta_{M_t})  \log   v = -\frac{(\pr_t-\Delta_{M_t})   v^{-1}}{  v^{-1}} - | \nabla^{M_t}\log   v|^2\\
     =&-\sum_{\alpha=n+1}^{n+m}\sum_{i,k=1}^n h^2_{\alpha,ik}-\sum_{k,i=1}^n\lambda_i^2 h_{n+i,ik}^2-2 \sum_{k=1}^n\sum_{1\leq i<j\leq n} \lambda_i\lambda_j h_{n+j,ik} h_{n+i,jk} . \nonumber
    \end{align}

    Equation (2.2) of \cite{W2} is the elliptic version of this. Notice that $\lambda_i=0$ for $i>\ell:=\min(n,m)$, so the first term and the third term in \eqref{eqn:heat-v3} can complete a square:
    \begin{align*}
    &-\sum_{\alpha=n+1}^{n+m}\sum_{i,k=1}^n h^2_{\alpha,ik}  -2 \sum_{k=1}^n\sum_{1\leq i<j\leq n} \lambda_i\lambda_j h_{n+j,ik} h_{n+i,jk} \\
    \leq& -\sum_{k=1}^n\left(\sum_{i,j=1}^\ell h_{n+j,ik}^2 +2 \sum_{1\leq i<j\leq \ell} \lambda_i\lambda_j h_{n+j,ik} h_{n+i,jk}\right)\\
    \leq &-\sum_{k=1}^n\sum_{1\leq i<j\leq \ell}\left(h_{n+j,ik}^2 - |h_{n+j,ik}||h_{n+i,jk}|+ h_{n+i,jk}^2  \right)\\
    =&-\sum_{k=1}^n\sum_{1\leq i<j\leq \ell}\Big(  |h_{n+j,ik}|-|h_{n+i,jk}|\Big)^2\leq 0,
    \end{align*}
    where we have used the area decreasing condition $\lambda_i\lambda_j< 1 $ for $i\neq j$ in the second inequality. Therefore, from \eqref{eqn:heat-v3}, we see
    \begin{align}
     (\pr_t-\Delta_{M_t})  \log   v\leq    -\sum_{k,i=1}^n\lambda_i^2 h_{n+i,ik}^2 \leq -\frac{1}{n}\sum_{k=1}^n \left(\sum_{i=1}^n \lambda_i h_{n+i,ik} \right)^2=-\frac{1}{n}|\nabla^{M_t}\log   v |^2.
    \end{align}

    This implies $  w=  v^{\frac{1}{n}}$ satisfies
    \[ (\pr_t-\Delta_{M_t})  \log   w\leq -|\nabla^{M_t}\log   w |^2 , \]
    hence
    \[(\pr_t-\Delta_{M_t})  w=   w (\pr_t-\Delta_{M_t})  \log   w - \frac{|\nabla^{M_t}   w|^2}{  w}\leq - \frac{2|\nabla^{M_t}   w|^2}{  w}  \]
    Therefore, we conclude the proof of the desired evolution equation \eqref{eqn:w-super-heat} along graphical mean curvature flow, assuming the area decreasing condition.
    
\end{proof}

We remark that \eqref{eqn:w-super-heat} will play a similar role as equation (1.6) in \cite{CM}, i.e., the evolution equation of the volume element in the hypersurface case.

\section{Proof of the main theorem}\label{sec:proof-main}

In this section, we will conclude the proof of the main theorem. The main strategy involves constructing a cut-off function similar to the one in Lemma 2 of \cite{CM} and applying the maximum principle as per the method in \cite{K} to derive the gradient upper bound. Before proceeding with these steps, we will perform a parabolic rescaling to simplify the situation .

\subsection{A simplification through parabolic rescaling}\label{subsec:parabolic-rescaling}

Now suppose the graph $\{M_t\}$ of $u(\cdot ,t): B^n_1\times [0,1]\rightarrow \mathbb R^m$ flows by mean curvature in $\mathbb R^{n+m}$ where $\textbf{z}$ is its position vector at given time $t$. We write $\Lambda=||u(\cdot,0)||_\infty\geq 0$, and we can do a parabolic rescaling with translation along $\textbf{y}$-directions as follows:
    \[(\textbf{z},t) \mapsto (\bar{\textbf{z}},\bar t )=\left(\frac{\textbf{z}}{1+2\Lambda} +(\textbf{0},\textbf{y}_0)  ,\frac{t}{(1+2\Lambda)^2}  \right),   \]

where $(\textbf{0},\textbf{y}_0)=\left(0,...,0,\frac{1+3\Lambda}{1+2\Lambda},...,\frac{1+3\Lambda}{1+2\Lambda}     \right)$ with all $x^i$ components being zero and all $y^\alpha$ components being $\frac{1+3\Lambda}{1+2\Lambda}$ represents a translation in the $\textbf{y}$-directions.

Therefore, we can write $\textbf{z}=(\textbf{x},u^\alpha(\textbf{x},t))$ and $\bar{\textbf{z}}=( \bar {\textbf{x}}, \bar u^\alpha(\bar {\textbf{x}},\bar t))$. So
    \[\bar {\textbf{x}}=\frac{ \textbf{x}}{1+2\Lambda},\quad \bar u^\alpha(\bar {\textbf{x}},\bar t) =\frac{u^\alpha(\textbf{x},t)}{1+2\Lambda} +\frac{1+3\Lambda}{1+2\Lambda}   ,\quad \bar t =\frac{t}{(1+2\Lambda)^2}, \]
and hence
\begin{equation}\label{eqn:height-function-transform}
    \bar u^\alpha (\bar {\textbf{x}},\bar t)=\frac{u^\alpha \Big( (1+2\Lambda) \bar {\textbf{x}}, (1+2\Lambda)^2\bar t \Big)  }{1+2\Lambda} +\frac{1+3\Lambda}{1+2\Lambda} .
\end{equation}
As a result of this transformation, $(\bar {\textbf{z}},\bar t)$ is a mean curvature flow of the graph of function $\bar u (\cdot,\bar t): B^n_{\frac{1}{1+2\Lambda}}\times \left[0, \frac{1}{(1+2\Lambda)^2}\right] \longrightarrow \mathbb R^m $ with the initial heights
\[\bar u^\alpha(\bar {\textbf{x}},0)= \frac{u^\alpha \Big( (1+2\Lambda) \bar {\textbf{x}}, 0 \Big)  }{1+2\Lambda} +\frac{1+3\Lambda}{1+2\Lambda}  \in [1,2],\quad \alpha=n+1,...n+m. \]

By the maximum principle, we know $1 \leq \bar u^\alpha \leq 2$ for all time. Moreover, the singular values of $d\bar u$ at a given point $(\bar {\textbf{x}},\bar t)$ are the same as singular values of $d u$ at the corresponding point $(\textbf{x},t)=\Big((1+2\Lambda)\bar{\textbf{x}},(1+2\Lambda)^2\bar t \Big)$. So if $u$ is an area decreasing map, so is $\bar u$. The norms of their gradients are related as follows
\[|d\bar u|(\bar {\textbf{x}},\bar t) = |du|\Big((1+2\Lambda) \bar {\textbf{x}}, (1+2\Lambda)^2\bar t \Big)=|du|(\textbf{x},t).     \]

We will slightly abuse notations by dropping the upper-bar and continue to use $\{M_t\}$, $u(\textbf{x},t): B_{\frac{1}{1+2\Lambda}}\times \left[0, \frac{1}{(1+2\Lambda)^2}\right]\rightarrow [1,2]^m$, and $\textbf{z}$ to denote the transformed graphical mean curvature flow, its height functions, and its position vectors.

\subsection{Computation for Colding-Minicozzi's cut-off function in higher codimension}

Now we consider a direct generalization of the cut-off function introduced in \cite{CM} to higher codimensional graphical mean curvature flow.

\begin{lma} \label{lem:CM-cut-off}

Set $\phi=\eta e^{-a|\textbf{y}|^2/t}$ for $a\geq 1$ and $\eta=\left(\frac{1}{1+2\Lambda}  -|\textbf{x}|^2-2nt\right)_+$. If the graph of $ u:B^n_{\frac{1}{1+2\Lambda}}\times \left[0, \frac{1}{(1+2\Lambda)^2}\right] \longrightarrow [1,2]^m $ flows by mean curvature, then 

\begin{equation}\label{eqn:cut-off-phi}
    (\pr_t-\Delta_{M_t})\phi\leq  \frac{e^{-a|  u|^2/t}}{t^2}\left(a\eta \sum_{\alpha=n+1}^{n+m} (  u^\alpha)^2 -  \sum_{i=1}^n \frac{2a^2\eta (  u^{n+i})^2\lambda_i^2}{1+\lambda_i^2}  +\sum_{i=1}^n \frac{ 8a \lambda_i   u^{n+i}}{1+\lambda_i^2} \right),
\end{equation}

provided $\eta>0$ and $t>0$. 
Here $\lambda_1\geq \lambda_2\geq ....\geq \lambda_n\geq 0$ are singular values of $d u$ at the given point.
\end{lma}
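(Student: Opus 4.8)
The plan is to compute the quantity $(\pr_t-\Delta_{M_t})\phi$ directly by treating $\phi = \eta\, e^{-a|\textbf{y}|^2/t}$ as a product and applying the Leibniz rules for the heat operator. First I would write
\[
(\pr_t-\Delta_{M_t})\phi = e^{-a|\textbf{y}|^2/t}(\pr_t-\Delta_{M_t})\eta + \eta\,(\pr_t-\Delta_{M_t})e^{-a|\textbf{y}|^2/t} - 2\,\nabla^{M_t}\eta\cdot\nabla^{M_t}e^{-a|\textbf{y}|^2/t}.
\]
For the first term I would invoke (or re-derive quickly) the fact that, in the support of $\eta = (\frac{1}{1+2\Lambda}-|\textbf{x}|^2-2nt)_+$, one has $(\pr_t-\Delta_{M_t})\eta \le 0$; this is the Ecker–Huisken–type computation, using that $(\pr_t-\Delta_{M_t})|\textbf{x}|^2 \le (\pr_t-\Delta_{M_t})|\textbf{z}|^2 = -2n$ since $|\textbf{x}|^2 = |\textbf{z}|^2 - |\textbf{y}|^2$ and $|\textbf{y}|^2$ has nonnegative... actually one must be careful: $\Delta_{M_t}|\textbf{x}|^2$ and $\Delta_{M_t}|\textbf{z}|^2$ differ by $\Delta_{M_t}|\textbf{y}|^2$, so I would instead just use $\Delta_{M_t}|\textbf{z}|^2 = 2n$ together with $\pr_t|\textbf{z}|^2 = -2\la\textbf{z},\textbf{H}\ra$ hmm — the clean route is that $\textbf{x}$ is an ambient linear coordinate vector so $(\pr_t-\Delta_{M_t})\textbf{x} = 0$ componentwise, hence $(\pr_t-\Delta_{M_t})|\textbf{x}|^2 = -2|\nabla^{M_t}\textbf{x}|^2$, giving $(\pr_t-\Delta_{M_t})\eta = 2(\tfrac{1}{1+2\Lambda}-|\textbf{x}|^2-2nt)(|\nabla^{M_t}\textbf{x}|^2 - n) - 2|\nabla^{M_t}|\textbf{x}|^2|^2 \le 0$ because $|\nabla^{M_t}\textbf{x}|^2 \le n$ (the tangential projection of an orthonormal frame). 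So the first term drops out entirely, which is why it does not appear on the right side of \eqref{eqn:cut-off-phi}.

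Next I would handle the exponential factor $f := e^{-a|\textbf{y}|^2/t}$ via $(\pr_t-\Delta_{M_t})f = f\big[(\pr_t-\Delta_{M_t})(-a|\textbf{y}|^2/t) - |\nabla^{M_t}(-a|\textbf{y}|^2/t)|^2\big]$. Here I compute $\pr_t(-a|\textbf{y}|^2/t) = a|\textbf{y}|^2/t^2 - (a/t)\pr_t|\textbf{y}|^2$, and $\Delta_{M_t}(-a|\textbf{y}|^2/t) = -(a/t)\Delta_{M_t}|\textbf{y}|^2$, so $(\pr_t-\Delta_{M_t})(-a|\textbf{y}|^2/t) = a|\textbf{y}|^2/t^2 - (a/t)(\pr_t-\Delta_{M_t})|\textbf{y}|^2$. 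Since $\textbf{y}$ is again an ambient linear coordinate, $(\pr_t-\Delta_{M_t})|\textbf{y}|^2 = -2|\nabla^{M_t}\textbf{y}|^2 = -2\sum_i \frac{\lambda_i^2}{1+\lambda_i^2}$ (using the singular-value description of the tangential frame, where $|\nabla^{M_t}y^{n+i}|^2$ contributes $\lambda_i^2/(1+\lambda_i^2)$ after diagonalization). The gradient term is $|\nabla^{M_t}(a|\textbf{y}|^2/t)|^2 = (a^2/t^2)|\nabla^{M_t}|\textbf{y}|^2|^2$, and $\nabla^{M_t}|\textbf{y}|^2 = 2\sum_\alpha y^\alpha \nabla^{M_t}y^\alpha$ which, after diagonalization, has squared norm $4\sum_i (y^{n+i})^2 \lambda_i^2/(1+\lambda_i^2)$ — this furnishes the $-2a^2\eta(u^{n+i})^2\lambda_i^2/(1+\lambda_i^2)$ term once multiplied by $\eta$. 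Finally the cross term $-2\nabla^{M_t}\eta\cdot\nabla^{M_t}f = -2f\cdot(-a/t)\nabla^{M_t}\eta\cdot\nabla^{M_t}|\textbf{y}|^2$ produces, after expanding $\nabla^{M_t}\eta = -2(\ldots)\nabla^{M_t}|\textbf{x}|^2$ and bounding the inner product of the tangential gradients of $|\textbf{x}|^2$ and $|\textbf{y}|^2$ by Cauchy–Schwarz against the factor $\eta$, the term $\sum_i 8a\lambda_i u^{n+i}/(1+\lambda_i^2)$. Collecting everything, identifying $|\textbf{y}| = |u|$ on the graph, and discarding all manifestly nonpositive leftover pieces (the $(\pr_t-\Delta_{M_t})\eta$ contribution, the $f$-intrinsic $-|\nabla|^2$ term beyond what is kept, etc.) yields \eqref{eqn:cut-off-phi}.

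The main obstacle I anticipate is the bookkeeping in the cross term and in the $|\nabla^{M_t}|\textbf{y}|^2|^2$ term: one needs the precise singular-value normal form for the tangential gradients of the ambient coordinate functions on a graph, i.e. that with the right orthonormal frame $e_i$ on $M_t$ one has $\nabla^{M_t}x^j = \frac{1}{\sqrt{1+\lambda_j^2}}e_j$ and $\nabla^{M_t}y^{n+i} = \frac{\lambda_i}{\sqrt{1+\lambda_i^2}}e_i$ (up to the orthogonal transformations from the SVD), so that all the cross terms between distinct indices vanish and the sums collapse to single sums over $i$. Getting the constant $8$ (rather than, say, $4$) right in the last term requires carefully tracking the factor of $2$ from $\nabla^{M_t}|\textbf{y}|^2 = 2\textbf{y}\cdot\nabla^{M_t}\textbf{y}$, the factor from $\nabla^{M_t}|\textbf{x}|^2$, the $(-a/t)$ from differentiating the exponent, and the overall $-2$ in the Leibniz cross term, then applying Young's inequality in the form that absorbs the $|\nabla^{M_t}|\textbf{x}|^2|$ factor into the (nonpositive) $(\pr_t-\Delta_{M_t})\eta$ remainder while keeping $\eta$ itself as the surviving weight. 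Everything else is a routine but lengthy application of the product/chain rules for the heat operator together with $(\pr_t-\Delta_{M_t})(\text{ambient linear coordinate}) = 0$.
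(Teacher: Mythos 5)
Your strategy is exactly the paper's: expand $(\pr_t-\Delta_{M_t})\phi$ by the product rule, use $(\pr_t-\Delta_{M_t})\eta\le 0$, apply the chain rule to the exponential factor, and evaluate all gradients and inner products in the singular-value frame where $\nabla^{M_t}x^j=\tfrac{1}{\sqrt{1+\lambda_j^2}}e_j$ and $\nabla^{M_t}y^{n+i}=\tfrac{\lambda_i}{\sqrt{1+\lambda_i^2}}e_i$. Two bookkeeping points need fixing before the argument closes. First, your formulas for the derivatives of $\eta$ are those of the \emph{squared} cutoff (as in Lemma \ref{lem:cutoff-for-area-decreasing}); here $\eta$ is the linear function $\left(\tfrac{1}{1+2\Lambda}-|\textbf{x}|^2-2nt\right)_+$, so on its support $(\pr_t-\Delta_{M_t})\eta=2|\nabla^{M_t}\textbf{x}|^2-2n\le 0$ and $\nabla^{M_t}\eta=-\nabla^{M_t}|\textbf{x}|^2$, without the extra prefactor $2(\cdots)$ or the $-2\bigl|\nabla^{M_t}|\textbf{x}|^2\bigr|^2$ term; your conclusions survive, but the stray factor would corrupt the constant in the cross term. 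Second, and more substantively, the term $-\tfrac{a}{t}\eta(\pr_t-\Delta_{M_t})|\textbf{y}|^2=+\tfrac{2a}{t}\eta\sum_i\tfrac{\lambda_i^2}{1+\lambda_i^2}$ is \emph{positive} and cannot be "discarded as manifestly nonpositive": it must be absorbed into half of the $-\tfrac{4a^2}{t^2}\eta\sum_i\tfrac{(u^{n+i})^2\lambda_i^2}{1+\lambda_i^2}$ contribution, via $4a^2(u^{n+i})^2-2at\ge 2a^2(u^{n+i})^2$, which is exactly where the hypotheses $u^{n+i}\ge 1$ (the target being $[1,2]^m$ after the rescaling), $a\ge 1$, and $t\le 1$ enter and why the stated bound carries $2a^2$ rather than $4a^2$. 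You never invoke the lower bound $u^{n+i}\ge1$, so this absorption step is missing; once it is supplied, together with $|x^i|\le1$ and $t\le1$ in the cross term to produce the $8a$, your computation reproduces \eqref{eqn:cut-off-phi}.
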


\begin{proof} 
   Suppose $\eta>0$ and $t>0$. For mean curvature flow, it's well-known that $(\pr_t-\Delta_{M_t})\eta \leq 0$, see (1.6) \cite{CM}. Now we directly differentiate to compute:
    \begin{align*}
\pr_t \left(\eta e^{-a|\textbf{y}|^2/t}\right)=&\pr_t \eta\cdot   e^{-a|\textbf{y}|^2/t} + \eta\left(-\frac{a}{t}\pr_t |\textbf{y}|^2 + \frac{a|\textbf{y}|^2}{t^2}  \right)     e^{-a|\textbf{y}|^2/t} , \\
\nabla^{M_t}\left(\eta e^{-a|\textbf{y}|^2/t}\right)=& \nabla^{M_t} \eta\cdot  e^{-a|\textbf{y}|^2/t} -\eta \frac{a}{t} \nabla^{M_t} |\textbf{y}|^2\cdot  e^{-a|\textbf{y}|^2/t}, \\
\Delta_{M_t}\left(\eta e^{-a|\textbf{y}|^2/t}\right)=& \Delta_{M_t} \eta\cdot  e^{-a|\textbf{y}|^2/t} -  \frac{2a}{t} \langle\nabla^{M_t} \eta, \nabla^{M_t} |\textbf{y}|^2\rangle \cdot  e^{-a|\textbf{y}|^2/t}\\
&-\eta \frac{a}{t} \Delta_{M_t} |\textbf{y}|^2\cdot  e^{-a|\textbf{y}|^2/t}+\eta \frac{a^2}{t^2} \left|\nabla^{M_t} |\textbf{y}|^2\right|^2\cdot  e^{-a|\textbf{y}|^2/t}.
    \end{align*}

Therefore, we can combine and get
\begin{align}\label{eqn:heat-phi1}
    &(\pr_t-\Delta_{M_t}) \phi \\
    =&(\pr_t-\Delta_{M_t}) \eta  \cdot e^{-a|\textbf{y}|^2/t} + \eta\left(-\frac{a}{t}(\pr_t-\Delta_{M_t})  |\textbf{y}|^2 + \frac{a|\textbf{y}|^2}{t^2}  \right)     e^{-a|\textbf{y}|^2/t} \nonumber\\
    &+\frac{2a}{t} \langle\nabla^{M_t}\eta , \nabla^{M_t} |\textbf{y}|^2\rangle \cdot  e^{-a|\textbf{y}|^2/t} -\eta \frac{a^2}{t^2} \left|\nabla^{M_t} |\textbf{y}|^2\right|^2\cdot  e^{-a|\textbf{y}|^2/t}  \nonumber\\
    \leq &\left(\frac{2a}{t}\eta\sum_{\alpha=n+1}^{n+m}|\nabla^{M_t}y^\alpha |^2 + \frac{a|\textbf{y}|^2}{t^2}\eta -\frac{2a}{t} \langle\nabla^{M_t}|\textbf{x} |^2 , \nabla^{M_t} |\textbf{y}|^2\rangle   -\eta \frac{a^2}{t^2} \left|\nabla^{M_t} |\textbf{y}|^2\right|^2   \right)e^{-a|\textbf{y}|^2/t},  \nonumber
\end{align}
where in the last inequality we have used $a\geq 1>0$, $\nabla^{M_t}\eta=-\nabla^{M_t}|\textbf{x}|^2$ and
\begin{align*}
    (\pr_t-\Delta_{M_t})|\textbf{y}|^2=&\sum_{\alpha=n+1}^{n+m}\Big(2y^\alpha(\pr_t-\Delta_{M_t})y^\alpha-2|\nabla^{M_t}y^\alpha |^2\Big)=-2\sum_{\alpha=n+1}^{n+m}|\nabla^{M_t}y^\alpha |^2.
\end{align*}

Since $M_t$ is graphical, the coordinate functions $\{x^i\}_{i=1}^n$ form a local coordinate system on $M_t$ and $y^\alpha=  u^\alpha(x^i,t)$, so we can rewrite \eqref{eqn:heat-phi1} as
\begin{align}\label{eqn:heat-phi2}
    &(\pr_t-\Delta_{M_t}) \phi\\
    \leq &e^{-a|  u|^2/t} \left(\frac{2a}{t}\eta \sum_{\alpha=n+1}^{n+m} \sum_{i,j=1}^n g^{ij} \pr_i   u^\alpha  \pr_j   u^\alpha  + \frac{a}{t^2}\eta \sum_{\alpha=n+1}^{n+m}(  u^\alpha )^2   \right. \nonumber\\
    &\quad\quad \left.  -\frac{2a}{t} \sum_{\alpha=n+1}^{n+m} \sum_{i,j=1}^n g^{ij}x^i   u^\alpha \pr_j   u^\alpha -\frac{4a^2}{t^2}\eta \sum_{\alpha,\beta=n+1}^{n+m} \sum_{i,j=1}^n g^{ij}   u^\alpha \pr_i   u^\alpha   u^\beta \pr_j   u^\beta\right),  \nonumber
\end{align}
where $g_{ij}$ is the induced metric on $M_t$ and $g^{ij}$ is its inverse. At a given point, we can employ the singular value decomposition of $d  u$ and choose coordinate systems $x^i$ and $y^\alpha$ appropriately such that
\[g^{ij}=\frac{1}{1+\lambda_i^2}\delta_{ij}, \quad \pr_i   u^\alpha=\lambda_i \delta_{\alpha,n+i}, \]
with $\lambda_1\geq \lambda_2\geq ...\geq \lambda_n$ being the singular values of $d  u$ at the given point. Therefore \eqref{eqn:heat-phi2} gives the desired inequality \eqref{eqn:cut-off-phi}
\begin{align*}
  & (\pr_t-\Delta_{M_t}) \phi\\
  \leq & \frac{e^{-a|  u|^2/t}}{t^2}\left(2at\eta \sum_{i=1}^n \frac{\lambda_i^2}{1+\lambda_i^2}  + a\eta \sum_{\alpha=n+1}^{n+m}(  u^\alpha )^2 -2at\sum_{i=1}^n \frac{x^i   u ^{n+i}\lambda_i }{1+\lambda_i^2} -4a^2\eta \sum_{i=1}^n \frac{(  u^{n+i})^2\lambda_i^2 }{1+\lambda_i^2}    \right)\\
   \leq &\frac{e^{-a|  u|^2/t}}{t^2} \left(a\eta \sum_{\alpha=n+1}^{n+m} (  u^\alpha)^2 -  \sum_{i=1}^n \frac{2a^2\eta (  u^{n+i})^2\lambda_i^2}{1+\lambda_i^2}  +\sum_{i=1}^n \frac{ 8a \lambda_i   u^{n+i}}{1+\lambda_i^2}\right),
\end{align*}

where in the last inequality we have used $a\geq 1, 0<t\leq 1, |x^i|\leq 1,   u^{n+i}\geq 1$ and hence
\[4a^2 (  u^{n+i})^2- 2at\geq  2a^2 (  u^{n+i})^2.\]
\end{proof}

\subsection{Korevarr type maximum principle argument}
Now we consider the maximum point $p$ of $\phi   w$ in $\bigcup_{t\in [0,\frac{1}{1+\Lambda} ] } M_t$ under the assumption that $  u$ is area decreasing map. By the choice of the cut-off $\phi$, we know $\phi   w =0$ on $M_0 \cup_t (\pr M_t)$ and $\max \phi   w>0$. Therefore, at $p$, we have
\begin{equation*}
        \nabla^{M_t}(\phi   w)=0,\quad  (\pr_t-\Delta_{M_t})(\phi   w)\geq 0.
\end{equation*}
 
The first equation implies
\[\nabla^{M_t} \phi=-\frac{\phi}{  w}  \nabla^{M_t}  w,\]

and the second equation implies
\[  w(\pr_t-\Delta_{M_t}) \phi -2\langle\nabla^{M_t} \eta ,\nabla^{M_t}  w\rangle+ \phi (\pr_t-\Delta_{M_t} )  w\geq 0. \]

From these two equations and Lemma \ref{lem:w-super-heat}, we know that if area decreasing condition holds at $p$, then at this point
\begin{equation}\label{eqn:heat-phigeq0}
    (\pr_t-\Delta_{M_t}) \phi \geq 0.
\end{equation}

By area decreasing condition, singular values of $d  u$ at this point $p$ have the ordering $\lambda_1\geq 1\geq \lambda_2\geq ....\geq \lambda_n\geq 0$. Now combine with \eqref{eqn:heat-phigeq0} and Lemma \ref{lem:CM-cut-off}, we derive at this point
\[0\leq  a\eta \sum_{\alpha=n+1}^{n+m} (  u^\alpha)^2 -  \sum_{i=1}^n \frac{2a^2\eta (  u^{n+i})^2\lambda_i^2}{1+\lambda_i^2}  +\sum_{i=1}^n \frac{ 8a \lambda_i   u^{n+i}}{1+\lambda_i^2}.\]

If $n>m$, then $\lambda_1\geq 1\geq \lambda_2\geq ....\geq \lambda_m\geq 0=\lambda_{m+1}...=\lambda_{n}$. If $n\leq m$, then we can fill in $0$ as $\lambda_{n+1},...,\lambda_{n+m}$ and simply take $  u^{n+1}\equiv 1,...,   u^{n+m}\equiv 1$, so that $\lambda_1\geq 1\geq \lambda_2\geq ....\geq \lambda_n\geq 0=\lambda_{n+1}...=\lambda_{m}$. In both cases, we deduce for $a=4m\geq 1$ that
\begin{align}\label{eqn:main-ineq}
0\geq & -a\eta \sum_{\alpha=n+1}^{n+m} (  u^\alpha)^2 +  \sum_{i=1}^m \frac{2a^2\eta (  u^{n+i})^2\lambda_i^2}{1+\lambda_i^2}  -\sum_{i=1}^m \frac{ 8a \lambda_i   u^{n+i}}{1+\lambda_i^2} \\
=& \sum_{i=1}^m \left(-4m (  u^{n+i})^2\eta +32m^2  \frac{\lambda_i^2}{1+\lambda_i^2} (  u^{n+i})^2\eta - 32m \frac{\lambda_i }{1+\lambda_i^2}  u^{n+i}  \right) \nonumber\\
=& \sum_{i=1}^m\frac{(32m^2-4m)\lambda_i^2 (  u^{n+i})^2\eta -32m \lambda_i   u^{n+i} - 4m (  u^{n+i})^2\eta }{1+\lambda_i^2} \nonumber\\
\geq & \sum_{i=1}^m\frac{16m^2\lambda_i^2 \eta -64m \lambda_i  - 16m \eta }{1+\lambda_i^2} \nonumber\\
=&16m \sum_{i=1}^m\frac{m\eta\lambda_i^2  -4 \lambda_i  -  \eta }{1+\lambda_i^2}, \nonumber
\end{align}    

where in the latter $\geq$, we have used the fact that $1\leq   u^{n+i}\leq 2$. We note that at $p$, $\eta\in (0,1]$.

Next, we argue that if the largest singular value $\lambda_1(p)$ becomes excessively large, then the right hand side of the aforementioned expression \eqref{eqn:main-ineq} will possess a positive lower bound, which leads to a contradiction. For this purpose, we utilize a simple calculus lemma:
  
\begin{lma}\label{lem:Calc}
For a fixed $\kappa\in(0,1]$ and positive integer $m$, the function $h(s)=\dfrac{m\kappa s^2-4s-\kappa}{1+s^2}$ is non-increasing on the interval $\left[0, s_*  \right]$ and non-decreasing on the interval $\left[ s_* ,\infty  \right)$, where $s_*$ is given by

\[s_*=\dfrac{\sqrt{\kappa^2(m+1)^2+4^2}-\kappa(m+1)}{4}. \]
    
\end{lma}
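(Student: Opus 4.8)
The plan is to treat this as an elementary one-variable calculus problem: analyze the sign of $h'(s)$ on $[0,\infty)$ and show it has a single zero at $s_*$, with $h'<0$ before and $h'>0$ after. First I would compute the derivative $h'(s)$ using the quotient rule. Writing $h(s)=\frac{m\kappa s^2-4s-\kappa}{1+s^2}$, the numerator of $h'(s)$ (after clearing the positive denominator $(1+s^2)^2$) is
\[
(2m\kappa s-4)(1+s^2)-(m\kappa s^2-4s-\kappa)(2s).
\]
Expanding this and collecting terms, the $s^3$ terms cancel and one is left with a quadratic in $s$, something of the form $2s^2+2\kappa(m+1)s-4$ up to an overall positive factor. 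So the sign of $h'(s)$ on $[0,\infty)$ is governed by the sign of a quadratic $q(s)=s^2+\kappa(m+1)s-2$ (after dividing by a positive constant).

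Next I would observe that $q$ is an upward-opening parabola with $q(0)=-2<0$, so it has exactly one positive root, call it $s_*$, and $q(s)<0$ for $s\in[0,s_*)$ while $q(s)>0$ for $s\in(s_*,\infty)$. Solving $q(s_*)=0$ by the quadratic formula gives $s_*=\frac{-\kappa(m+1)+\sqrt{\kappa^2(m+1)^2+8}}{2}$. I would then reconcile this with the stated formula $s_*=\frac{\sqrt{\kappa^2(m+1)^2+16}-\kappa(m+1)}{4}$ by checking that the derivative's numerator quadratic (before any normalization) is exactly $4s^2+4\kappa(m+1)s-16$ or equivalently that the relevant quadratic is $s^2+\kappa(m+1)s-4$ — i.e., I would carefully re-expand the quotient-rule numerator to pin down the correct constant term, since $4^2=16$ appears under the radical in the paper's statement, pointing to $-4$ rather than $-2$ as the constant. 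Once the correct quadratic $s^2+\kappa(m+1)s-4=0$ is confirmed, its unique positive root is precisely $s_*=\frac{-\kappa(m+1)+\sqrt{\kappa^2(m+1)^2+16}}{4}\cdot\frac{?}{?}$ — here I must be careful: the quadratic formula on $s^2+\kappa(m+1)s-4$ gives $\frac{-\kappa(m+1)+\sqrt{\kappa^2(m+1)^2+16}}{2}$, so to match the paper's denominator $4$ the derivative numerator quadratic must actually be $2s^2+\kappa(m+1)\cdot 2\cdot s - \text{(something)}$; the point of this step is simply to carry out the expansion honestly and read off the root.

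The monotonicity conclusion then follows immediately: $h'(s)<0$ on $(0,s_*)$ and $h'(s)>0$ on $(s_*,\infty)$, so $h$ is non-increasing on $[0,s_*]$ and non-decreasing on $[s_*,\infty)$, which is the claim. I would also note in passing that $h'(s)$ at the endpoints and the fact that $s_*>0$ genuinely lies in the interior of $[0,\infty)$ for all $\kappa\in(0,1]$ and all positive integers $m$, since the constant term of the governing quadratic is negative regardless of the parameter values. There is no real obstacle here — the only thing to be careful about is bookkeeping in the derivative computation, in particular getting the constant term of the quadratic right so that the closed form for $s_*$ matches the stated expression with $4^2$ under the radical and $4$ in the denominator; a sign or factor-of-two slip there is the most likely source of error, so I would double-check by plugging $s_*$ back into the derivative.
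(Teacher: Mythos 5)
Your approach is exactly the paper's: quotient rule, note the $s^3$ cancellation, and read off the sign of the resulting quadratic numerator. But the one computation the lemma actually hinges on is left unresolved in your write-up. Your first guess for the numerator, $2s^2+2\kappa(m+1)s-4$, has the wrong leading coefficient, and the two alternative candidates you float afterwards ($s^2+\kappa(m+1)s-4$, then ``$2s^2+2\kappa(m+1)s-(\text{something})$'') are also wrong; you end by deferring to ``carry out the expansion honestly'' without ever doing so. The correct expansion is
\[
(2m\kappa s-4)(1+s^2)-(m\kappa s^2-4s-\kappa)(2s)
=4s^2+2(m+1)\kappa s-4,
\]
where the $+4s^2$ comes from $-4s^2$ (from the first product) plus $+8s^2$ (from subtracting $-8s^2$ in the second). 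Its unique positive root is
\[
\frac{-2(m+1)\kappa+\sqrt{4(m+1)^2\kappa^2+64}}{8}
=\frac{\sqrt{\kappa^2(m+1)^2+16}-\kappa(m+1)}{4}=s_*,
\]
which matches the statement, and since the quadratic is upward-opening with negative value at $s=0$, the sign pattern of $h'$ and hence the monotonicity claim follow as you say. So the gap is purely arithmetic, but it is the whole content of the lemma: without pinning down $4s^2+2(m+1)\kappa s-4$ you cannot certify that the turning point is the specific $s_*$ used later in the contradiction argument of Section 4. Plugging the displayed root back into the numerator, as you suggest, is a fine sanity check once you have the right quadratic.
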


\begin{proof}
    By direct differentiation, we have
    \begin{align*}
        h'(s)=\frac{4s^2+2(m+1)\kappa s -4}{(1+s^2)^2},
    \end{align*}
    which has a unique positive root $s_*$ and $h'(s)$ is non-positive on $[0,s_*]$  and non-negative on $[s_*,\infty)$.
\end{proof}

Now suppose $\eta\lambda_1 (p)\geq  8m$, so we have the following consequences at point $p$ since $\eta(p)\in (0,1]$ and $\lambda_1\lambda_i< 1$ for $i\geq 2$:
\begin{equation}
    \begin{cases}
\lambda_1 &\geq \dfrac{8m}{\eta} \geq   \dfrac{\sqrt{\eta^2(m+1)^2+4^2}+\eta(m+1)}{4}>s_*    \\
\lambda_i &\leq \dfrac{\eta}{8m}\leq \dfrac{4}{\sqrt{\eta^2(m+1)^2+4^2}+\eta(m+1)}=s_*, \quad \text{for}\quad i\geq 2.
    \end{cases}
\end{equation}

Apply Lemma \ref{lem:Calc}, we see
\begin{equation} \label{eqn:main-ineq-2}
    \begin{cases}
h(\lambda_1)\geq \dfrac{m\eta \left(8m/\eta  \right)^2-4\left(8m/\eta  \right)-\eta}{1+\left(8m/\eta  \right)^2}  =   \dfrac{1}{\eta} \dfrac{64m^3-32m-\eta^2   }{1+\left(8m/\eta  \right)^2}  \\
h(\lambda_i)\geq  \dfrac{m\eta \left(\eta/8m  \right)^2-4\left(\eta/8m  \right)-\eta}{1+\left(\eta/8m  \right)^2} =   \dfrac{1}{\eta} \dfrac{m\eta^2-32m-64m^2   }{1+\left(4m/\eta  \right)^2}  , \quad \text{for}\quad i\geq 2.
    \end{cases}
\end{equation}

However, \eqref{eqn:main-ineq} and \eqref{eqn:main-ineq-2} imply
\begin{align*}
   0\geq  &\sum_{i=1}^m\frac{m\eta\lambda_i^2  -4 \lambda_i  -  \eta }{1+\lambda_i^2}=\sum_{i=1}^m h(\lambda_i)\\
    \geq & \dfrac{1}{\eta} \dfrac{64m^3-32m-\eta^2   }{1+\left(8m/\eta  \right)^2}  +  \dfrac{1}{\eta} \dfrac{m\eta^2-32m-64m^2   }{1+\left(8m/\eta  \right)^2} \times (m-1)\\
    =& \frac{1}{\eta} \dfrac{32m^2+(m^2-m-1)\eta^2  }{1+\left(8m/\eta  \right)^2}  >0,
\end{align*}

which is a contradiction! This means, if area decreasing condition holds at max point $p$ of $\phi w=\eta e^{-4m\frac{|  u|^2}{t}}   w$, then largest eigenvalue $\lambda_1(p)$ satisfies
\[\eta(p)\lambda_1(p)\leq 8m.   \]

Notice that $|d   u|= \sqrt{\sum_i\lambda_i^2} \leq   w^n$  and $  w\leq 2\lambda_1$ , so we get

\[\eta e^{-4m\frac{|  u|^2}{t}}   w  \leq  2\eta (p) \lambda_1 (p)\leq 16m.\]

By the fact that $|  u|^2\leq 4m$, this implies
\[ \left(\frac{1}{(1+2\Lambda)^2}-|\textbf{x}|^2-2nt \right)^n |d  u|(\textbf{x},t)\leq (16m)^n  e^{4nm\frac{|  u|^2}{t} } \leq (16m)^n  e^{16nm^2/t }  . \]

Recall from our earlier discussion that we have made a notational simplification, where the $u$ functions referred to here actually represent the height functions of the transformed graphical mean curvature flow, as explained in Subsection \ref{subsec:parabolic-rescaling}. Building upon this, we have successfully demonstrated that if the transformed map $\bar u(\cdot,\bar t): B^n_{\frac{1}{1+2\Lambda}}\rightarrow [1,2]^m$ satisfies area-decreasing condition at $\bar {\textbf{x}}$ for $\bar t \in \left(0,\frac{1}{(1+2\Lambda)^2}\right]$, then we can derive the following estimate:

\[ \left(\frac{1}{(1+2\Lambda)^2}-|\bar{\textbf{x}}|^2-2n\bar t \right)^n |d  \bar u|(\bar{ \textbf{x}},\bar t)\leq  (16m)^n  e^{16nm^2/\bar t}.   \]

Transform this estimate back to the original $u$ map via \eqref{eqn:height-function-transform} and we conclude that if   $u(\cdot,t):B_1^n\rightarrow \mathbb R^m$ is area decreasing at $\textbf{x}$ and $t\in (0,1]$, then
\[ \left(\frac{1}{(1+2\Lambda)^2}-\frac{|\textbf{x}|^2}{(1+2\Lambda)^2}-\frac{2nt}{(1+2\Lambda)^2} \right)^n |du|(\textbf{x},t)\leq  (16m)^n  e^{16nm^2(1+2\Lambda)^2  / t}   . \]

In particular, by Corollary \ref{cor:area-decreasing}, we know  $u\left(\cdot,\frac{1}{4n} \right)$ is area decreasing at origin $\textbf{0}$. So at the $ \left(\textbf{0}, \frac{1}{4n} \right)\in B_1^n\times (0,1]$, we have
\[|du| \left(\textbf{0}, \frac{1}{4n} \right) \leq  (32m)^n (1+2\Lambda)^n e^{64n^2m^2 (1+2\Lambda)^2 },\]

where $\Lambda= ||u(\cdot,0)||_\infty $. This implies the desired estimate:
\begin{equation}
        |du|(\textbf{0},\frac{1}{4n})\leq K_1 e^{K_2 ||u(\cdot,0)||^2_\infty},
\end{equation}

    for some constants $K_1,K_2>0$ that only depend on $n,m$. This concludes the proof of Theorem \ref{thm:main}.

\appendix

\section{Interior gradient estimate for Minimal Surface System}\label{app:MSS}
In this appendix, we provide a Korevarr type maximum principle proof of the interior gradient estimate for Minimal Surface System assuming the area non-increasing condition, which is self-contained. We remark that the result was already proven in \cite{W2} using two different methods: an integral method and a Korevarr type maximum principle method. The integral proof in \cite{W2} gave a robust and correct demonstration of the sharp estimate while the maximum principle proof is incomplete and we intend to give a refined version here.

We consider $u=(u^{n+1},...,u^{n+m}):B_1\subset \mathbb R^n\longrightarrow  (-\infty,-1]^m$, after a translation $u^\alpha\mapsto u^\alpha-||u^\alpha||_\infty-1$ for each $\alpha=n+1,...,n+m$, solving the minimal surface system 
\begin{equation}\label{eqn:MSS}
    \sum_{i,j=1}^n g^{ij} \frac{\partial^2 u^\alpha}{\partial x^i \partial x^j}=0, \quad \text{for each}\quad \alpha=n+1,...,n+m,
\end{equation}

where $g^{ij}=(g_{ij})^{-1}$ and induced metric is given by
\[g_{ij}=\delta_{ij} + \sum_{\beta=n+1}^{n+m}\frac{\partial u^\beta}{\partial x^i} \frac{\partial u^\beta}{\partial x^j},  \]

and the induced volume element is given by
\[v=\sqrt{\det (g_{ij})}=\sqrt{\det\left(I + (du)^T du \right)}.\]

We denote the graph of of $u$ in $\mathbb R^{n+m}$ over $B_1$ by $\mathfrak S$, and we will always use the following Laplace operator induced by $g$: 
\[\Delta=g^{ij}\frac{\partial^2 }{\partial x^i \partial x^j}, \]
where $x^i$ is Euclidean coordinate system on $B_1\subset \mathbb R^n.$

Our goal is to establish an upper bound for $|du^\alpha|$ in terms of $|u^\alpha|$ at an interior point for $u^\alpha$ solving \eqref{eqn:MSS}, assuming area non-increasing condition of the map $u$.

We prefer using a tilted function $\Tilde{F}=\Tilde{F}(x,y)$ to denote a function defined on $\mathbb R^n\times\mathbb R^m$, where $\{x^i, i=1,...,n\}$ is a coordinate system on $\mathbb R^n$ and $\{y^\alpha,\alpha=n+1,...,n+m\}$ is a coordinate system on $\mathbb R^m$. $\Tilde{F}_i$ and $\Tilde{F}_\alpha$ represent regular partial derivatives $\frac{\partial \Tilde{F}}{\partial x^i}$ and $\frac{\partial \Tilde{F}}{\partial y^\alpha}$, same holds for higher order partial derivatives.

The main theorem of this appendix can be summarized as follows
\begin{thm}[\cite{W2}]\label{thm:MSS-gradient-estimate}
    Let $u=(u^{n+1},...,u^{n+m}):B_1\subset \mathbb R^n\longrightarrow  (-\infty,-1]^m$ be a $C^2$ solution to minimal surface system \eqref{eqn:MSS} s.t. the Jacobian of $du:\mathbb R^n\rightarrow \mathbb R^m$ on any two dimensional subspace of $\mathbb R^n$ is less than or equal to one, then we have at the origin, we have estimate
    \begin{equation}
        |du|(0)\leq K_1 e^{K_2 |u(0)|^2},
    \end{equation}
    where $K_1,K_2$ only depends on $n$.
\end{thm}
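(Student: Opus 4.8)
The plan is to mirror the parabolic proof of Theorem~\ref{thm:main} in the stationary (elliptic) setting, using the minimal surface system in place of the mean curvature flow and a Korevaar-type barrier built from the distance to the boundary rather than a space-time cut-off. First I would record the elliptic analogue of Lemma~\ref{lem:w-super-heat}: for a solution of \eqref{eqn:MSS} whose differential has all two-by-two Jacobians $\le 1$ (the area non-increasing condition), the volume element $v=\sqrt{\det(I+(du)^Tdu)}$ and its $n$-th root $w=v^{1/n}$ satisfy $\Delta w \le -2|\nabla w|^2/w$ on $\mathfrak S$; this is Equation (2.2) of \cite{W2} combined with the same completing-the-square argument already carried out in the proof of Lemma~\ref{lem:w-super-heat} (the third and first terms pair up because $\lambda_i=0$ for $i>\ell=\min(n,m)$, and the remaining cross term is controlled since $\lambda_i\lambda_j\le 1$). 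Note that under the area non-increasing hypothesis we only get $\le$ rather than $<$, so the square $(|h_{n+j,ik}|-|h_{n+i,jk}|)^2$ term remains exactly nonnegative, which is all we need.

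Next I would introduce the elliptic Colding--Minicozzi cut-off. After the translation $u^\alpha \mapsto u^\alpha - \|u^\alpha\|_\infty - 1$ the map takes values in $(-\infty,-1]^m$, so $|u^\alpha|\ge 1$; rescaling so that the domain ball has a definite radius, set $\eta = (r_0^2 - |\mathbf{x}|^2)_+$ for an appropriate $r_0$ and define $\phi = \eta\, e^{-a|\mathbf{y}|^2}$ with $a\ge 1$ to be chosen (no $1/t$ factor is needed here — time is gone). A direct differentiation, exactly parallel to Lemma~\ref{lem:CM-cut-off} but with $\Delta$ in place of $\pr_t-\Delta_{M_t}$ and using $\Delta|\mathbf{x}|^2 = 2\sum_i (1+\lambda_i^2)^{-1}$, $\Delta\eta \le 0$, yields an inequality of the shape
\begin{equation*}
\Delta\phi \le e^{-a|u|^2}\Big( a\eta\sum_\alpha (u^\alpha)^2 - \sum_i \frac{2a^2\eta (u^{n+i})^2\lambda_i^2}{1+\lambda_i^2} + \sum_i \frac{C a\lambda_i |u^{n+i}|}{1+\lambda_i^2}\Big),
\end{equation*}
provided $\eta>0$; here we absorb the linear-in-$\lambda_i$ boundary terms into the quadratic one using $|u^{n+i}|\ge 1$ and $a\ge1$, just as in the flow case.

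The heart of the argument is then the same maximum principle comparison: let $p$ be the point of $\mathfrak S$ where $\phi w$ attains its maximum; since $\phi w = 0$ on $\pr\mathfrak S$ and is positive inside, at $p$ one has $\nabla(\phi w)=0$ and $\Delta(\phi w)\le 0$, which combined with the elliptic $\Delta w \le -2|\nabla w|^2/w$ forces $\Delta\phi \ge 0$ at $p$. Feeding this into the cut-off inequality and invoking the area non-increasing condition — which gives $\lambda_1\lambda_i\le 1$ for $i\ge 2$, hence after the analysis $\lambda_2,\dots,\lambda_n$ are all bounded by the critical value $s_*$ of the calculus function $h(s) = (m\kappa s^2 - 4s - \kappa)/(1+s^2)$ from Lemma~\ref{lem:Calc} while $\lambda_1$ is either small or large — I would show that $\eta(p)\lambda_1(p)$ cannot exceed a dimensional constant $c(n,m)$, for otherwise $\sum_i h(\lambda_i) > 0$, contradicting $\Delta\phi\ge 0$. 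The choice $a = 4m$ makes the coefficients work out exactly as in \eqref{eqn:main-ineq}--\eqref{eqn:main-ineq-2}. Since $|du| = \sqrt{\sum_i \lambda_i^2} \le w^n$ and $w \le 2\lambda_1$, the bound on $\eta(p)\lambda_1(p)$ gives $\eta\, e^{-a|u|^2} w \le c(n,m)$ everywhere, and evaluating at the center (where $\eta$ is a fixed positive number) and using $|u^\alpha|\le \|u^\alpha\|_\infty + \text{const}$ produces $|du|(0)\le K_1 e^{K_2|u(0)|^2}$ after undoing the translation and rescaling.

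The main obstacle I anticipate is not any single computation but making the Korevaar maximum-point argument fully rigorous where \cite{W2} was incomplete: one must be careful that the maximum of $\phi w$ is attained at an interior point of $\mathfrak S$ (guaranteed by the vanishing of $\phi$ on the boundary and continuity, since $w$ is bounded on compact pieces), that the second-derivative test is applied to the composed function on the manifold rather than naively in coordinates, and above all that the area non-increasing condition — used with a non-strict inequality throughout — still closes the estimate. The delicate point is precisely that, unlike the flow case where Corollary~\ref{cor:area-decreasing} restores strict area decrease at the relevant point, here we have no such improvement; one checks that strictness was never actually needed because the contradiction $\sum_i h(\lambda_i)>0$ in \eqref{eqn:main-ineq-2} is obtained with a strict inequality coming from the $\eta^2$ and $m^2$ terms, independent of whether $\lambda_i\lambda_j < 1$ or $=1$. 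A secondary technical point is choosing the rescaling radius $r_0$ and verifying that the boundary term $\sum_i 8a\lambda_i|u^{n+i}|/(1+\lambda_i^2)$ is genuinely dominated after the substitution $4a^2(u^{n+i})^2 - 2a r_0 \ge 2a^2(u^{n+i})^2$, which holds once $r_0$ is normalized to $1$ and $|u^{n+i}|\ge 1$.
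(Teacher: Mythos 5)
Your overall strategy — a Korevaar maximum principle applied to (cut-off)$\times w$, powered by the differential inequality for $w$ coming from the area non-increasing condition, followed by a singular value decomposition and a calculus lemma — is exactly the strategy of the paper's Appendix \ref{app:MSS}. But as written your chain of implications does not close, because you have transplanted the parabolic inequalities without flipping signs. The correct elliptic statement (Lemma 2.2 of \cite{W2}, inequality \eqref{eqn:w-subharmonic} in the paper) is $\Delta w \geq 2|\nabla w|^2/w$, i.e.\ $\log v$ is \emph{subharmonic} for the induced metric; your claimed $\Delta w \leq -2|\nabla w|^2/w$ is false (it would make $w$ superharmonic). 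Consequently, at an interior maximum $p$ of $\phi w$ one has $\Delta(\phi w)\leq 0$ and $\nabla\phi=-(\phi/w)\nabla w$, which together with the \emph{correct} inequality for $w$ yield $\Delta\phi\leq 0$ at $p$ — not $\Delta\phi\geq 0$ as you assert (and note that $\Delta\phi\geq 0$ does not follow from your own premises either: from $w\Delta\phi-\tfrac{2\phi}{w}|\nabla w|^2+\phi\Delta w\leq 0$ and $\phi\Delta w\leq -\tfrac{2\phi}{w}|\nabla w|^2$ one gets only an upper bound on $\Delta\phi$ by a nonnegative quantity). The cut-off computation must therefore be run in the opposite direction: you need a \emph{lower} bound $\Delta\phi\geq(\cdots)$ whose right-hand side is strictly positive when $\eta\lambda_1$ is large, contradicting $\Delta\phi\leq 0$. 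The fix is mechanical (the sign of the good term $a^2\eta\,|\nabla^{M}|\mathbf{y}|^2|^2e^{-a|\mathbf{y}|^2}$ in $\Delta\phi$ does come out positive), but every displayed inequality in your second and third paragraphs is currently stated in the wrong direction.

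Separately, the paper does not use the Colding--Minicozzi cut-off $\eta\,e^{-a|\mathbf{y}|^2}$ in the elliptic setting; it uses Korevaar's ansatz $\eta=f\circ\tilde\phi$ with $f(t)=e^{C_1t}-1$ and $\tilde\phi$ \emph{linear} in $\sum_\alpha y^\alpha$ as in \eqref{eqn:cutoff-phi}. The reason is worth appreciating: in the parabolic proof one first performs a parabolic rescaling that forces $u^\alpha\in[1,2]$, and the estimates in Lemma \ref{lem:CM-cut-off} use $1\leq u^{n+i}\leq 2$ repeatedly. There is no analogous normalization for the minimal surface system, so with your cut-off the terms $8a|u^{n+i}(p)|\lambda_i/(1+\lambda_i^2)$ at the unknown maximum point involve $|u(p)|$, which is not controlled by $|u(0)|$; you would have to check that the quadratic good term $a^2\eta(u^{n+i})^2\lambda_i^2/(1+\lambda_i^2)$ and the factor $e^{-a|u(p)|^2}$ absorb this (plausible, but not done in your sketch). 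The paper's choice of $\tilde\phi$, with the $\tfrac{1}{2u_0}\sum_\alpha y^\alpha$ term and $u_0=-\sum_\alpha u^\alpha(0)$, sidesteps the issue entirely because $\tilde\phi_{\alpha\beta}=0$ and the cut-off vanishes once $\sum_\alpha y^\alpha\leq -2u_0$, so only $|u(0)|$ ever enters. Your final remark about strictness of the area-decreasing condition is correct: only $\lambda_i\lambda_j\leq 1$ is needed to complete the square in the key lemma.
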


\begin{rem}
    By translation and re-scaling, we should be able to get the following estimate for any domain $\Omega\subset \mathbb R^n$ and any $x_0\in \Omega$:
    \begin{equation}
        |du|(x_0)\leq K_1 e^{K_2 |u(x_0)|^2/d},
    \end{equation}
    where $d=\text{dist}(x_0,\partial \Omega)$.
\end{rem}

\begin{rem}
    Constants $K_1,K_2$ can be chosen to only depend on $n$, because the rank of $du$ is always bounded above by $n$.
\end{rem}

\begin{rem}
    Similar to the hypersurface case, such estimate is no sharp in the sense that on can prove 
    \[ |du|(0)\leq K_1 e^{K_2 |u(0)|},\]
    under the same assumption using integral method. For further details, we refer readers to Section 3 of \cite{W2}.
\end{rem}

\subsection{Key assumption: area non-increasing}

Key condition we are imposing is that the Jacobian of $du:\mathbb R^n\rightarrow \mathbb R^m$ on any two dimensional subspace of $\mathbb R^n$ is less than or equal to one. ($u$ is an area non-increasing map.)

This assumption can be described in terms of singular values of $du$, i.e. eigenvalues $\lambda_i\geq 0$ of $\sqrt{(du)^Tdu}$ at any fixed point $x_0\in B_1$. The assumption is equivalent to say, any any point, we have for $1\leq i\neq j\leq n$,
\[\lambda_i\lambda_j \leq 1.\]

We can always give an ordering to the singular values that $\lambda_1\geq \lambda_2\geq...\geq \lambda_n\geq 0$.

The area non-increasing condition implies $\lambda_1\geq 1 \geq \lambda_2\geq...\geq \lambda_n\geq 0$. So under such condition, the gradient estimate reduces to bound $\lambda_1$.

\subsection{Key Lemma:} Under the assumption of area non-increasing, $w=v^{\frac{1}{n}}$ satisfies an important partial differential inequality.

\begin{lma}[cf. Lemma 2.2 in \cite{W2}]If $\lambda_i\lambda_j\leq 1$ for $i\neq j$, then induced volume element $v$ satisfies
    \[\Delta \log v \geq \frac{1}{n}|\nabla \log v |^2,  \]
    where $\nabla$ and $\Delta$ denote the gradient and Laplacian of the induced metric $g_{ij}$. Equivalently, we have $w=v^{\frac{1}{n}}$ satisfies
    \begin{equation}\label{eqn:w-subharmonic}
        \Delta w \geq \frac{2|\nabla w|^2}{w}.
    \end{equation}
\end{lma}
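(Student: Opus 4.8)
The plan is to derive the inequality $\Delta \log v \geq \frac{1}{n}|\nabla\log v|^2$ directly, in close analogy with the parabolic computation in Lemma \ref{lem:w-super-heat}, but dropping all the time-derivative terms since we are now in the elliptic (minimal surface system) setting. First I would recall the elliptic analogue of \eqref{eqn:heat-v3}: for a solution of the minimal surface system with flat domain and target, Equation (2.2) of \cite{W2} gives
\[
\Delta \log v = \sum_{\alpha=n+1}^{n+m}\sum_{i,k=1}^n h_{\alpha,ik}^2 + \sum_{k,i=1}^n \lambda_i^2 h_{n+i,ik}^2 + 2\sum_{k=1}^n\sum_{1\leq i<j\leq n}\lambda_i\lambda_j h_{n+j,ik}h_{n+i,jk},
\]
where $h_{\alpha,ik}$ are the components of the second fundamental form of $\mathfrak{S}$ at the given point, computed in an orthonormal frame adapted to the singular value decomposition of $du$, and $\lambda_i = 0$ for $i > \ell := \min(n,m)$. (Here the reciprocal volume element $v^{-1} = *\Omega$ plays the role it did in \cite{W1}, and the sign flips relative to the parabolic case because $v$ itself, rather than $v^{-1}$, is what we track.)

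The next step is purely algebraic and is where the area non-increasing hypothesis $\lambda_i\lambda_j \leq 1$ for $i\neq j$ enters. I would group the first term $\sum_\alpha \sum_{i,k} h_{\alpha,ik}^2$ together with the cross term $2\sum_k \sum_{i<j}\lambda_i\lambda_j h_{n+j,ik}h_{n+i,jk}$, restricting the first sum to the indices $i,j \leq \ell$ (which only decreases it, since all terms are nonnegative), and complete the square: for each fixed $k$ and each pair $i<j\leq \ell$,
\[
h_{n+j,ik}^2 + h_{n+i,jk}^2 + 2\lambda_i\lambda_j h_{n+j,ik}h_{n+i,jk} \geq h_{n+j,ik}^2 + h_{n+i,jk}^2 - 2|h_{n+j,ik}||h_{n+i,jk}| = \big(|h_{n+j,ik}| - |h_{n+i,jk}|\big)^2 \geq 0,
\]
using $\lambda_i\lambda_j \leq 1$. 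Hence $\sum_\alpha \sum_{i,k} h_{\alpha,ik}^2 + 2\sum_k\sum_{i<j}\lambda_i\lambda_j h_{n+j,ik}h_{n+i,jk} \geq 0$, and therefore
\[
\Delta\log v \geq \sum_{k,i=1}^n \lambda_i^2 h_{n+i,ik}^2.
\]
Finally I would apply Cauchy--Schwarz to the sum over $i$ for each fixed $k$: $\big(\sum_i \lambda_i h_{n+i,ik}\big)^2 \leq n \sum_i \lambda_i^2 h_{n+i,ik}^2$, together with the identity $|\nabla\log v|^2 = \sum_k \big(\sum_i \lambda_i h_{n+i,ik}\big)^2$ (the elliptic version of \eqref{eqn:heat-v2}, established in the proof of Lemma 2.2 of \cite{W2}), to conclude $\Delta\log v \geq \frac{1}{n}|\nabla\log v|^2$.

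The passage to the stated form \eqref{eqn:w-subharmonic} is then routine: writing $w = v^{1/n}$, one has $\Delta \log w = \frac{1}{n}\Delta\log v \geq \frac{1}{n}\cdot\frac{1}{n}|\nabla\log v|^2 = |\nabla\log w|^2$, and since $\Delta w = w\,\Delta\log w + w|\nabla\log w|^2$, this gives $\Delta w \geq 2w|\nabla\log w|^2 = 2|\nabla w|^2/w$. The only genuine subtlety—and thus the main obstacle—is making sure the elliptic second-variation formula for $\Delta\log v$ is quoted with the correct signs and in a frame where the singular value decomposition simultaneously diagonalizes $du$ and makes the metric $g^{ij} = (1+\lambda_i^2)^{-1}\delta_{ij}$; everything downstream is elementary linear algebra (completing the square and Cauchy--Schwarz). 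This is exactly the elliptic shadow of the argument already carried out in Lemma \ref{lem:w-super-heat}, so I would present it by pointing to \cite{W1,W2} for the formula and then reproducing the two short algebraic steps.
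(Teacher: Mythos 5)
Your proposal is correct and is essentially the argument the paper itself relies on: the paper cites Lemma 2.2 of \cite{W2} for this statement and carries out exactly this computation (the same square-completion using $\lambda_i\lambda_j\le 1$ followed by Cauchy--Schwarz in $i$ for each fixed $k$) in the parabolic setting in the proof of Lemma \ref{lem:w-super-heat}. Your elliptic transcription, including the sign flip from tracking $v$ rather than $v^{-1}$ and the final conversion from $\log v$ to $w=v^{1/n}$, is accurate.
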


We remark that, $w$ controls the gradient $|du|$ from above pointwisely: using singular values,
\[|du|=\sqrt{\sum_{i=1}^n \lambda_i^2}\leq \sqrt{\prod_{i=1}^n (1+\lambda_i^2)}=v=w^n. \]

\subsection{Maximum Principle}

We assume that $u=(u^{n+1},...,u^{n+m}):B_1\subset \mathbb R^n\longrightarrow  (-\infty,-1]^m$ solves \eqref{eqn:MSS}. Let $\Tilde{\eta}=\Tilde{\eta}(x,y)$ be a cut-off function that is non-negative and continuous on $B_1\times (-\infty,-1]^m$ and it's zero on $\{(x,y)\in \mathbb R^n\times \mathbb R^m: |x|=1,\ y^\alpha<-1 \}$.

Let $\eta(x)=\Tilde{\eta}(x,u(x))$ be the restriction of $\Tilde{\eta}$ to the graph $\mathfrak S$ of $u$. Now the function $\eta w$ achieves a positive maximum in the interior point of $B_1$, say $p$. Then at this point $p$, we have
\begin{equation*}
    \begin{cases}
        \nabla(\eta w)=0,\\
        \Delta(\eta w)\leq 0.
    \end{cases}
\end{equation*}

The first equation implies at $p$,
\[\nabla \eta=-\frac{\eta}{w}\nabla w,\]

and the second equation implies at $p$,
\[w\Delta \eta +2\nabla \eta\cdot \nabla w + \eta \Delta w\leq 0. \]

Combining these two equations and \eqref{eqn:w-subharmonic} under the assumption of area non-increasing map, we get at $p$,
\begin{equation}\label{eqn:laplace-eta}
    \Delta \eta \leq 0.
\end{equation}

\subsection{Specifying cut-off}

Following Korevarr's ansatz in \cite{K}, we write $\Tilde{\eta}=f\circ \Tilde{\phi}$, where $f$ is a smooth single variable, increasing, strictly convex function with $f(0)=0$, and $\Tilde{\phi}=\Tilde{\phi}(x,y)$ is a real valued function defined on $B_1\times (-\infty,-1]^m$. Hence $\eta(x)=f\circ \Tilde{\phi}(x,u(x))$ and we can compute easily for $1\leq i,j\leq n$:
\begin{equation}\label{eqn:derivatives-eta}
    \begin{split}
\eta_i=&f'\cdot (\Tilde{\phi}_i+\Tilde{\phi}_\alpha u^\alpha_i), \\   
\eta_{ij}=&f''\cdot (\Tilde{\phi}_i+\Tilde{\phi}_\alpha u^\alpha_i) (\Tilde{\phi}_j+\Tilde{\phi}_\beta u^\beta_j) \\
&+f'\cdot (\Tilde{\phi}_{ij}+\Tilde{\phi}_{i\alpha} u^\alpha_j+\Tilde{\phi}_{j\alpha} u^\alpha_i+\Tilde{\phi}_{\alpha\beta} u^\alpha_iu^\beta_j+\Tilde{\phi}_\alpha u^\alpha_{ij}),
    \end{split}
\end{equation}
where $\eta_i,\eta_{ij}$, $u_i^\alpha$, $u_{ij}^\alpha$ are evaluated at $x$; $f',f''$ are evaluated at $\Tilde{\phi}(x,u(x))$ and $\Tilde{\phi}_i,\Tilde{\phi}_\alpha,\Tilde{\phi}_{ij},\Tilde{\phi}_{i\alpha},\Tilde{\phi}_{\alpha\beta}$ are evaluated $(x,u(x))$.

Therefore, at the maximum point $p$ of $\eta w$, using the \eqref{eqn:MSS}, \eqref{eqn:laplace-eta} and \eqref{eqn:derivatives-eta}, we have
\begin{align}\label{eqn:laplace-eta-tilephi}
   0\geq & \Delta \eta = g^{ij}\eta_{ij}\\ 
     =&f''\cdot g^{ij}(\Tilde{\phi}_i+\Tilde{\phi}_\alpha u^\alpha_i) (\Tilde{\phi}_j+\Tilde{\phi}_\beta u^\beta_j) \nonumber\\
     &+f'\cdot g^{ij}(\Tilde{\phi}_{ij}+\Tilde{\phi}_{i\alpha} u^\alpha_j+\Tilde{\phi}_{j\alpha} u^\alpha_i+\Tilde{\phi}_{\alpha\beta} u^\alpha_iu^\beta_j+\Tilde{\phi}_\alpha \cancel{u^\alpha_{ij}}). \nonumber
\end{align}

Up to here, we haven't specified what $\tilde \phi$ is. Before doing that, we can make use of diagonalization at $p$ to simplify the expression. We can choose coordinate systems $x^i$ and $y^\alpha$ appropriately such that
\[g^{ij}=\frac{1}{1+\lambda_i^2}\delta_{ij}, \quad u^\alpha_i=\lambda_i \delta_{\alpha,n+i}. \]

Because $f',f''>0$ by assumption, we see from \eqref{eqn:laplace-eta-tilephi},
\begin{align*}
   0\geq & f''\sum_{i=1}^n\frac{\delta_{ij}}{1+\lambda_i^2} \left(\tilde \phi_i + \tilde\phi_{n+i} \lambda_i   \right)\left(\tilde \phi_j + \tilde\phi_{n+j} \lambda_j   \right)+f' g^{ij}(\Tilde{\phi}_{ij}+\Tilde{\phi}_{i\alpha} u^\alpha_j+\Tilde{\phi}_{j\alpha} u^\alpha_i+\Tilde{\phi}_{\alpha\beta} u^\alpha_iu^\beta_j).
\end{align*}

Right now, we specify our choice of $\tilde\phi$: for a fixed number $u_0\geq 1$, we set
\begin{equation}\label{eqn:cutoff-phi}
    \tilde\phi(x,y)=\left(\frac{1}{2u_0}\sum_\alpha y^\alpha +1 -|x|^2 \right)^+.
\end{equation}

At the maximum point $p$ of $\eta w$, $\eta(p)>0$ so $\tilde\phi(p,u(p))>0$, hence $\tilde\phi$ is smooth in a neighborhood $p$ and we can compute at the point $p$: for $i=1,...,n$
\[\tilde\phi_i=-2x^i,\quad  \tilde\phi_{n+i} =\frac{1}{2u_0},\quad \tilde\phi_{ij}=-2\delta_{ij},\quad \tilde\phi_{i,n+i}=0=\tilde\phi_{n+i,n+i}.  \]

As a result, for this choice of $\tilde\phi$, we see at $p$:
\begin{align}\label{eqn:Cf''+Cf'}
   0\geq & f''\sum_{i=1}^n\frac{1}{1+\lambda_i^2}
   \left(-2x^i+\frac{1}{2u_0}\lambda_i   \right)^2+f' g^{ij}(-2\delta_{ij}) \\
   =& f''\sum_{i=1}^n \frac{(\lambda_i-4u_0x^i )^2 }{4u_0^2(1+\lambda_i^2)} -f'\sum_{i=1}^n\frac{2}{1+\lambda_i^2} \nonumber\\
   \geq &f'' \frac{(\lambda_1-4u_0x^1 )^2 }{4u_0^2(1+\lambda_1^2)}  - 2n f' \nonumber\\
   \geq & f''  \frac{(\lambda_1-4u_0|x^1| )^2 }{4u_0^2(1+\lambda_1^2)}  - 2n f' \nonumber
\end{align}
where in the second inequality, we have used the fact that $f',f''>0$ and we kept the term involving largest singular $\lambda_1$ in the first summation; in the last inequality we have used $(a-b)^2\geq (|a|-|b|)^2$ and $\lambda_1\geq 0$.

Next, we want to argue that if the largest singular value $\lambda_1(p)$ is too big, then the coefficient of $f''$ in the right hand side \eqref{eqn:Cf''+Cf'} will have a positive lower bound, which helps us to derive a contradiction. For this purpose, we need a calculus lemma:
\begin{lma}\label{lem:calc}
   If $c\geq 0$, then the function $h(s)=\dfrac{(s-c)^2}{1+s^2}$ is non-decreasing on the interval $\left[c ,\infty  \right)$. 
\end{lma}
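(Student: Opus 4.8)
The claim to prove is Lemma \ref{lem:calc}: for $c\geq 0$, the function $h(s)=\dfrac{(s-c)^2}{1+s^2}$ is non-decreasing on $[c,\infty)$. This is an elementary single-variable calculus statement, so the plan is simply to differentiate and check the sign of the derivative on the relevant interval.

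\medskip

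The plan is to compute $h'(s)$ directly via the quotient rule. Writing $h(s)=\frac{(s-c)^2}{1+s^2}$, one gets
\[
h'(s)=\frac{2(s-c)(1+s^2)-(s-c)^2\cdot 2s}{(1+s^2)^2}=\frac{2(s-c)\big[(1+s^2)-s(s-c)\big]}{(1+s^2)^2}=\frac{2(s-c)(1+cs)}{(1+s^2)^2}.
\]
Since $c\geq 0$, the factor $1+cs$ is strictly positive for all $s\geq 0$, and the denominator $(1+s^2)^2$ is strictly positive; hence on $[c,\infty)$ the sign of $h'(s)$ is governed entirely by the factor $(s-c)$, which is non-negative there. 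Therefore $h'(s)\geq 0$ on $[c,\infty)$, which gives that $h$ is non-decreasing on that interval.

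\medskip

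There is essentially no obstacle here: the only thing to be careful about is the algebraic simplification of the bracket $(1+s^2)-s(s-c)=1+cs$, which makes the factorization of the numerator clean, and noting that $1+cs>0$ uses precisely the hypothesis $c\geq 0$ together with $s\geq 0$ on the domain under consideration. One may optionally remark that $h$ is in fact non-increasing on $[0,c]$ by the same sign analysis (there $(s-c)\leq 0$), paralleling the structure of Lemma \ref{lem:Calc}, though only the stated monotonicity on $[c,\infty)$ is needed for the application in \eqref{eqn:Cf''+Cf'}.
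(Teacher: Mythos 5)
Your proof is correct and follows exactly the paper's argument: the paper also differentiates directly to obtain $h'(s)=\dfrac{2(s-c)(1+cs)}{(1+s^2)^2}$ and notes this is non-negative on $[c,\infty)$ since $c\geq 0$. Your version simply spells out the quotient-rule computation and the simplification $(1+s^2)-s(s-c)=1+cs$ in more detail, which is fine.
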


\begin{proof}
This is by direct differentiation. Since $c\geq 0$, we have
    \begin{align*}
        h'(s)=\frac{2(s-c)(1+cs)}{(1+s^2)^2}\geq 0\quad \text{on}\quad [c,\infty).
    \end{align*}

\end{proof}

Now suppose $\lambda_1\geq 8u_0>4u_0 |x^1|$, so we have the following consequences by Lemma \ref{lem:calc} for $c=4u_0|x^1|$: since $|x^1|\leq 1$,
\begin{equation}
 \frac{(\lambda_1-4u_0|x^1| )^2 }{1+\lambda_1^2}\geq  \frac{(8u_0-4u_0|x^1|)^2}{1+(8u_0)^2}\geq \frac{16 u_0^2}{1+ (8u_0)^2}
\end{equation}

Therefore, by \eqref{eqn:Cf''+Cf'}, $\lambda_1(p)\geq 8u_0$ implies at $p$,
\begin{align*}
    0
   \geq & f''  \frac{(\lambda_1-4u_0|x^1| )^2 }{4u_0^2(1+\lambda_1^2)}  - 2n f'\\
   \geq & f'' \frac{4}{1+(8u_0)^2}-2n f'\\
   \geq &\frac{1}{100u_0^2}f'' - 2n f',\quad \text{as}\quad u_0\geq 1.
\end{align*}

If we have taken $f(t)=e^{C_1t}-1 $, with $C_1=300n u_0^2$, then we have that $\lambda_1(p)\geq 8u_0$ implies
\[0\geq \frac{1}{100u_0^2}C_1^2-2n C_1=300n^2u_0^2>0,\]

which is a contradiction!

Now let's summarize what we have proven: at the maximum point $p$ of $\eta(x)w(x)$ where $\eta(x)=e^{C_1\tilde\phi(x,u(x))}-1$, $\tilde\phi$ is given by \eqref{eqn:cutoff-phi}, we have upper bound for largest singular value $\lambda_1(p)\leq 8u_0$. Hence 
\[w(p)=\left(\prod_{i=1}^n (1+\lambda_i(p)^2)\right)^{\frac{1}{2n}}\leq 2^{\frac{n-1}{2n}} (1+64u_0^2)^{\frac{1}{2n}}=:C_2.\]

Therefore, for any $x\in B_1$, we have
\[\eta(x)w(x)\leq \eta(p)w(p)\leq C_2e^{C_1}.\]

To get estimate at the origin $x=0$, we take $u_0=-\sum_\alpha u^\alpha(0)\geq 1$ (recall we have done translation s.t. each $u^\alpha\leq -1$), then 
\begin{align*}
  &\tilde\phi(0,u(0))= \left(\frac{1}{2u_0}\sum_\alpha u^\alpha(0) +1 -0 \right)^+=\frac{1}{2} \\
  \Longrightarrow &\eta(0)=e^{\frac{1}{2}C_1}-1,\\
  \Longrightarrow & \left(e^{\frac{1}{2}C_1}-1\right) w(0)\leq C_2 e^{C_1}, 
\end{align*}

where $C_1$ depends quadratic-ly on $u_0$ and $C_2$ depends polynomial-ly on $u_0$. This immediately implies
\[|du|(0)\leq K_1 e^{K_2' u_0^2}\leq K_1 e^{K_2 |u(0)|^2},\]
where $K_1,K_2$ only depends on $n$. This finishes the proof of Theorem \ref{thm:MSS-gradient-estimate}.

\end{document}